\newtheorem{theorem}{Theorem}
\newtheorem{proposition}[theorem]{Proposition}
\newtheorem{lemma}[theorem]{Lemma}
\newtheorem{corollary}[theorem]{Corollary}
\theoremstyle{remark}
\theoremstyle{definition}
\newtheorem{definition}[theorem]{Definition}
\title[Graded identities for pairs]{Specht property for the graded identities of the pair $(M_2(D), sl_2(D))$}
\author[Códamo]{Ramon Códamo}
\thanks{R. Códamo was financed in part by the Coordena\c c\~ao de
Aperfei\c{c}oamento de Pessoal de N\'\i vel Superior - Brasil (CAPES) - Finance Code 001}
\address{Department of Mathematics, UNICAMP, 13083-859 Campinas, SP,  Brazil}
\email{ramoncodamo@gmail.com}
\author[Koshlukov]{Plamen Koshlukov}
\thanks{P. Koshlukov was partially supported by FAPESP grant No.~2018/23690-6 and by CNPq grant No.~302238/2019-0}
\address{Department of Mathematics, UNICAMP, 13083-859 Campinas, SP,  Brazil}
\email{plamen@ime.unicamp.br}
\keywords{Specht property; graded polynomial identities; weak identities}
\subjclass{16R10, 16R50, 17B01}
\begin{document}

\maketitle

\begin{abstract}
Let $D$ be a Noetherian infinite integral domain, denote by $M_2(D)$ and by $sl_2(D)$ the $2\times 2$ matrix algebra and the Lie algebra of the traceless matrices in $M_2(D)$, respectively. In this paper we study the natural grading by the cyclic group $\mathbb{Z}_2$ of order 2 on $M_2(D)$ and on $sl_2(D)$. We describe a finite basis of the graded polynomial identities for the pair $(M_2(D), sl_2(D))$. Moreover we prove that the ideal of the graded identities for this pair satisfies the Specht property, that is every ideal of graded identities of pairs (associative algebra, Lie algebra), satisfying the graded identities for $(M_2(D), sl_2(D))$, is finitely generated. The polynomial identities for $M_2(D)$ are known if $D$ is any  field of characteristic different from 2. The identities for the Lie algebra $sl_2(D)$ are known when $D$ is an infinite field. The identities for the pair we consider were first described by Razmyslov when $D$ is a field of characteristic 0, and afterwards by the second author when $D$ is an infinite field. The graded identities for the pair $(M_2(D), gl_2(D))$ were also described, by Krasilnikov and the second author.

In order to obtain these results we use certain graded analogues of the generic matrices, and also techniques developed by G. Higman concerning partially well ordered sets.
\end{abstract}

\section*{Introduction}
The bulk of modern PI theory is devoted to the study of polynomial identities satisfied by algebras over a field $F$, and most of the research assumes that the field is of characteristic 0. The description of the identities satisfied by an important algebra is one of the classical problems in PI theory. But it also turned out to be among the hardest ones. The identities satisfied by the infinite dimensional Grassmann algebra $E$ were described by Latyshev \cite{latyshev} and, in great detail,  by Krakowski and Regev \cite{krreg}. The identities satisfied by $M_2(F)$, the full matrix algebra of order 2, were found by Razmyslov \cite{razmm2}, in characteristic 0, and by the second author in \cite{pkm2} over infinite fields of characteristic different from 2. It is a long-standing open problem to determine whether the identities of $M_2(F)$ admit a finite generating set in characteristic 2. The polynomial identities satisfied by the upper triangular matrices are well known in any characteristic. The identities of the tensor square of the Grassmann algebra, $E\otimes E$, in characteristic 0, were obtained by Popov \cite{popov}. And these comprise,  more or less, the list of algebras where the identities are known. In 1950 Specht posed a problem that shaped a good part of the development of PI theory: If $F$ is a field of characteristic 0, and $A$ is an associative $F$-algebra satisfying polynomial identities, is the ideal of identities $T(A)$ finitely generated? It was solved in the affirmative by Kemer in 1984--1986, see \cite{kemer}. Clearly one can ask a similar question for other classes of algebras and/or other fields. In the case of Lie algebras over infinite fields of characteristic 2, Vaughan-Lee gave a counterexample \cite{vlee}; later on Drensky constructed examples (even finite-dimensional) over any infinite field of positive characteristic \cite{drenskyspecht}. Counterexamples for associative algebras over an infinite field of positive characteristic were discovered much later \cite{Be, Gr, VS}. In characteristic 0, Kemer's result holds for Lie \cite{ilt_lie}, alternative \cite{ilt_alt}, Jordan \cite{vz} algebras, but under some additional restrictions. If $A$ is an associative algebra then under the product (commutator) $[a,b]=ab-ba$ it becomes a Lie algebra denoted by $A^{(-)}$. By the well known PBW theorem, every Lie algebra $L$ is a subalgebra of some $A^{(-)}$. Analogously if one substitutes the associative product by the symmetric (or Jordan) product $a\circ b=ab+ba$ then it becomes a Jordan algebra denoted by $A^{(+)}$. The Jordan algebras $A^{(+)}$ and their subalgebras are called special, otherwise they are exceptional. Contrary to the Lie case, there exist exceptional Jordan algebras. In \cite{razmm2} Razmyslov introduced the so called weak polynomial identities, these are called sometimes identities of representations of Lie algebras. Let $L$ be a Lie algebra and suppose $L\subseteq A^{(-)}$ for some associative algebra $A$. Suppose further that $L$ generates $A$ as an algebra. A polynomial in the free associative algebra $F\langle X\rangle$ is a weak identity for the pair $(A,L)$ if it vanishes (as an element of $A$) under every substitution of its variables by elements of $L$. For example, if $x\in sl_2(F)$ is a traceless matrix then its square is a scalar matrix, hence one has the weak identity $[x^2,y]$ for the pair $M_2(F), sl_2(F))$. Razmyslov \cite{razmm2} described the weak identities for the pair $(M_2(F),sl_2(F))$ in characteristic 0, and this was crucial in his description of finite bases of the identities for both $M_2(F)$ and $sl_2(F)$. Later on the identities of $sl_2(F)$ were described when $F$ is an infinite field of characteristic $p>2$ by Vasilovsky \cite{vasal}; the weak identities of $(M_2(F),sl_2(F))$ in \cite{kjawpi}. It turns out that the weak identities for the pair $(M_2(F),sl_2(F))$ are determined by the single identity above. Weak identities have been used in the study of identities in nonassociative algebras to great effect, see for example \cite{razmm2, vasal, pkm2}. All this comes to say that studying weak polynomial identities is an important part of PI theory. On one hand they tend to be "easier" to investigate than the ordinary ones, on the other hand they still provide valuable information about the ordinary (associative, Lie or whatever) identities in the corresponding algebras. 

Introducing an additional structure on the algebras under consideration is a powerful tool in studying various aspects of the algebras. More specifically, for the PI case, one may study algebras with involution, or with trace, and "incorporate" the additional operation into the identities. In this way very general results were obtained, and important methods were developed, or adapted to PI algebras. Group gradings on algebras first appeared long ago: the commutative polynomial algebra in one or several variables is naturally graded by the integers $\mathbb{Z}$, by the degree. These gradings are of significant importance in Commutative Algebra, and in practically every branch of Mathematics. Group gradings on Lie algebras arose more than a century ago, in relation to the root systems of semisimple Lie algebras. Later on gradings by the group of order 2, $\mathbb{Z}_2$, turned out extremely useful. These led to the extremely important notion of a Lie (or Jordan) superalgebra. Recall here that in the case of associative algebras, $A$ is $\mathbb{Z}_2$-graded whenever $A=A^{(0)}\oplus A^{(1)}$, a direct sum of vector subspaces such that $A^{(i)}A^{(j)}\subseteq A^{(i+j)}$ where $i+j$ is considered modulo 2. For associative algebras, there is no difference between $\mathbb{Z}_2$-graded algebras and superalgebras. On the other hand, while a Lie (or Jordan) superalgebra is indeed $\mathbb{Z}_2$-graded, it is seldom a Lie (or Jordan) algebra. We do not enter here into details as we will not need the notion of superalgebras in this paper.

Graded identities have been studied quite extensively, and the results obtained are far more general than their non-graded counterparts. Thus for example the graded identities for the matrix algebras $M_n(F)$, when $F$ is infinite, were described in \cite{vaspams} and in \cite{ssa}, where the grading is the natural one by the cyclic group of order $n$. Analogues of Kemer's theory for associative algebras graded by finite groups were also obtained \cite{AB}, \cite{IS}. Here we recall that the relation between the graded and the ordinary identities satisfied by a given algebra is not very strong, and one can hardly describe the ordinary identities by just knowing the graded ones. On the other hand, if two graded algebras satisfy the same graded identities then they satisfy the same ordinary identities. This latter fact is frequently used in PI theory.

Let $A$ be an associative algebra and let $F\langle X\rangle$ be the free associative algebra freely generated over $F$ by the infinite set $X=\{x_1,x_2,\ldots\}$. The set of all polynomials $f(x_1,\ldots,x_n)\in F\langle X\rangle$ such that $f(a_1,\ldots,a_n)=0$ in $A$ for every $a_i\in A$ is the T-ideal of $A$ (the ideal of polynomial identities of $A$). Clearly $T(A)$ is closed under endomorphisms of $F\langle X\rangle$, and every ideal in $F\langle X\rangle$ with this property is the ideal of identities of an algebra. If $S\subseteq T(A)$ and $S$ generates $T(A)$ as a T-ideal then $S$ is called a basis of $T(A)$. A T-ideal $I$ satisfies (or has) the Specht property if it admits a finite basis, and every T-ideal containing $I$ also admits a finite basis. Kemer's theorem mentioned above states that every T-ideal in the free associative algebra, over a field of characteristic 0, satisfies the Specht property. One defines the above notions for graded identities in an analogous way. If $G$ is a finite group then the results from \cite{AB} and \cite{IS} read that if $G$ is a finite group then every $G$-graded T-ideal in characteristic 0 has the Specht property. 

In this paper we study the $\mathbb{Z}_2$-graded weak identities for the pair $(M_2(D), sl_2(D))$ where $D$ is an infinite (unital) domain. We determine explicitly a finite basis of the graded weak identities for it. Furthermore we establish the Specht property for the ideal of weak graded identities for the above pair. We impose an obvious restriction on $D$, namely we require $D$ to be Noetherian; in this case one can consider even a more general situation where $D$ is a Noetherian commutative unital ring. In order to obtain a finite basis for the graded weak identities of $(M_2(D), sl_2(D))$ we use an adequate model for the relatively free graded pair, namely generic matrices, and their evaluations. For the Specht property we use methods and techniques that go back to the paper by G. Higman \cite{higman}, concerning partially well ordered sets (PWOS). We recall that PWOS have been used extensively in PI theory for establishing the Specht property of certain T-ideals.

\section{Preliminaries}

Let $G=\mathbb{Z}_2=\{0,1\}$ (we use the additive notation), and let $Y=\{y_1,y_2,\ldots\}$, $Z=\{z_1,z_2,\ldots\}$ be disjoint infinite countable sets. Fix a field $F$, and form the free unital associative algebra $F\langle X\rangle$ where $X=Y\cup Z$. It is 2-graded in the following way. We declare the variables from $Y$ even (of degree 0), and the ones from $Z$ odd (of degree 1), and extend this to the monomials (that is to the basis of the vector space $F\langle X\rangle$). Hence a monomial is of degree 0 whenever it contains an even number of variables from the set $Z$ (counting their multiplicities). If $A=A^{(0)}\oplus A^{(1)}$ is a $G$-graded algebra, a polynomial $f(y_1,\ldots, y_k, z_1,\ldots, z_m)$ is a graded identity for $A$ if it vanishes on $A$ under every substitution of the $y_i$ by elements from $A^{(0)}$ and the $z_i$ by elements from $A^{(1)}$. In other words $f\in \ker\varphi$ for every algebra homomorphism $\varphi\colon F\langle X\rangle\to A$ that respects the grading. Such homomorphisms are called graded ones. 

Let $L\langle X\rangle \subseteq F\langle X\rangle$ be the Lie subalgebra of $F\langle X\rangle^{(-)}$ generated by the set $X$. By the well known Witt theorem $L\langle X\rangle$ is the free Lie algebra; it inherits the grading from $F\langle X\rangle$; moreover it is the free graded Lie algebra. Clearly the pair $(F\langle X\rangle, L\langle X\rangle)$ is free in the class of all pairs, and it is freely generated by $X$.  An ideal $I$ in $F\langle X\rangle$ is a weak T-ideal whenever it is closed under all graded endomorphisms of the pair $(F\langle X\rangle, L\langle X\rangle)$. If $S\subseteq F\langle X\rangle$ then the intersection $J$ of all weak T-ideals that contain $S$ is the weak T-ideal generated by $S$. Clearly $J$ coincides with the homogeneous (in the grading) ideal generated by the polynomials
\[
\{ f(p_1, \ldots, p_k, q_1,\ldots, q_m)\mid f(y_1,\ldots,y_k, z_1,\ldots, z_m)\in S, p_i\in L\langle X\rangle^{(0)}, q_j\in  L\langle X\rangle^{(1)}\}.
\]
Let $(A,L)$ be a graded pair, denote by $Id(A,L)$ the weak T-ideal of the graded identities for it. If $I$ is a weak T-ideal in $F\langle X\rangle$ then $I=Id(A,L)$ where $A=F\langle X\rangle/I$ and $L=L\langle X\rangle+I/I$. If $S\subseteq F\langle X\rangle$ is a set of homogeneous polynomials we denote by $\mathcal{V}=\mathcal{V}(S)$ the class of all graded pairs that satisfy the weak identities from $S$. This class is the variety of graded pairs determined by $S$.  As in the case of ordinary polynomial identities one can prove that if $I=Id(A,L)$ then the pair $(F\langle X\rangle/I, L\langle X\rangle+I/I)$ is relatively free in the variety determined by $I$. 

Let $S\subseteq F\langle X\rangle$ be a nonempty set of homogeneous polynomials. An element $f\in F\langle X\rangle$ is a consequence of the identities of the set $S$ if $f$ lies in the least ideal of weak graded identities that contains $S$. Sometimes we shall say that $f$ follow from the identities in $S$. Suppose $I$ is an ideal of weak graded identities and $S\subseteq I$. The set $S$ is called a basis of $I$ whenever $S$ generates $I$ as an ideal of weak graded identities. We draw the readers' attention that we do not require minimality of $S$. If $f$ and $g$ are two homogeneous elements of $F\langle X\rangle$ then they are equivalent as weak graded identities whenever $f$ follows from $g$ and $g$ follows from $f$. 

The matrix algebra $A=M_2(F)$ is $\mathbb{Z}_2$-graded in a natural way where the diagonal matrices form $A^{(0)}$ and the off-diagonal ones form $A^{(1)}$. Then $sl_2(F)\subseteq M_2(F)$ inherits this grading. The graded  identities of $M_2(F)$ were first described by Di Vincenzo in \cite{divinc} in characteristic 0, and later on in \cite{pkssa} this result was extended to any infinite field. It was proved in \cite{divinc, pkssa} that they follow from the graded identities $y_1y_2-y_2y_1$ and $z_1z_2z_3-z_3z_2z_1$. As commented above the graded identities for the Lie algebra $sl_2(F)$ were obtained in \cite{pk_sl2}, they are consequences of the single identity $[y_1,y_2]$. The graded identities for the pair $(M_2(F), gl_2(F))$ were described in \cite{kk}. In the latter paper $F$ was assumed an infinite integral domain.

Clearly all the above definitions and notions hold when one substitutes the field $F$ by an infinite unital domain $D$. Hence from now on we assume that $F=D$ is such a domain. Let $A=D[\alpha_i,\beta_i,\gamma_i\mid i\ge 1]$ be the unital commutative polynomial algebra in the variables $\alpha_i$, $\beta_i$, $\gamma_i$. We form the  associative algebra $M_2(A)$ and denote by $R$ its unital (associative) subalgebra generated by the matrices
\begin{equation}
\label{genmatr}
Y_i=\begin{pmatrix} \alpha_i& 0\\ 0& -\alpha_i\end{pmatrix}, \qquad Z_i=\begin{pmatrix} 0&\beta_i\\ \gamma_i&0\end{pmatrix}, \qquad i=1, 2, \ldots,
\end{equation}
and by $S$ the Lie subalgebra of $M_2(A)^{(-)}$  generated by the $Y_i$ and $Z_i$. Then $R=R_0\oplus R_1$ is naturally $\mathbb{Z}_2$-graded by assuming the $Y_i$ even and the $Z_i$ odd elements, and analogously for $S=S_0\oplus S_1$. Since $S_i=R_i\cap S$ we obtain that $(R,S)$ is a graded pair.

The following proposition is well known in the case of associative algebras, as well as in the case of (non-graded) pairs. Its proof repeats that of the associative case, and that is why we omit it.

\begin{proposition}
The pair $(R,S)$ is relatively free in the variety of graded pairs determined by $Id(M_2(D), sl_2(D))$.
\end{proposition}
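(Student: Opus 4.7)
The plan is to exhibit $(R,S)$ as isomorphic, as a graded pair, to $(F\langle X\rangle/I,\, (L\langle X\rangle+I)/I)$ where $I = Id(M_2(D), sl_2(D))$. The discussion preceding the statement has already identified this quotient pair as relatively free in the variety determined by $I$, so the proposition follows once the isomorphism is established. As a first step I would define a graded associative homomorphism $\pi\colon F\langle X\rangle \to R$ by $\pi(y_i)=Y_i$ and $\pi(z_j)=Z_j$. This respects the $\mathbb{Z}_2$-grading by construction; it is surjective because $R$ is generated as an associative algebra by the $Y_i$ and $Z_j$; and it carries $L\langle X\rangle$ onto $S$ because $S$ is defined as the Lie subalgebra of $M_2(A)^{(-)}$ generated by the images of $X$. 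Consequently the whole proof reduces to identifying $\ker \pi$ with $I$.

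The technical heart of the argument is the following lemma: for any homogeneous $f(y_1,\ldots,y_k,z_1,\ldots,z_m)\in F\langle X\rangle$, one has $\pi(f)=0$ in $M_2(A)$ if and only if $f \in Id(M_2(D), sl_2(D))$. The four entries of the matrix $\pi(f)\in M_2(A)$ are polynomials in the commuting variables $\alpha_1,\ldots,\alpha_k,\beta_1,\ldots,\beta_m,\gamma_1,\ldots,\gamma_m$ with coefficients in $D$. For any tuple $(a_i,b_j,c_j)\in D^{k+2m}$, the specialisation $\alpha_i\mapsto a_i$, $\beta_j\mapsto b_j$, $\gamma_j\mapsto c_j$ yields $f$ evaluated on the matrices
\[
\begin{pmatrix} a_i & 0 \\ 0 & -a_i \end{pmatrix}\in sl_2(D)^{(0)},\qquad \begin{pmatrix} 0 & b_j \\ c_j & 0 \end{pmatrix}\in sl_2(D)^{(1)}.
\]
As the tuple varies over $D^{k+2m}$, these matrices range over all homogeneous elements of $sl_2(D)$, so $f\in Id(M_2(D), sl_2(D))$ is equivalent to every such specialisation of $\pi(f)$ being zero. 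Since $D$ is an infinite integral domain, a polynomial in $D[\alpha_i,\beta_j,\gamma_j]$ that vanishes at every point of $D^{k+2m}$ is the zero polynomial, so this is in turn equivalent to $\pi(f)=0$ in $M_2(A)$.

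The lemma gives $\ker\pi = I$, and the induced graded homomorphism $\bar{\pi}\colon F\langle X\rangle/I \to R$ is therefore a bijection sending $(L\langle X\rangle+I)/I$ onto $S$; this is the required isomorphism of graded pairs. I expect no serious obstacle: the three entries $\alpha_i,\beta_i,\gamma_i$ are precisely what is needed to parametrize $sl_2(D)^{(0)}\oplus sl_2(D)^{(1)}$ faithfully, the infinite-integral-domain hypothesis is used only in the standard ``values determine the polynomial'' step, and the Lie subalgebra $S$ takes care of itself once $\pi$ is seen to match the Lie generators. This is the reason the authors are content to refer to the associative and non-graded-pair analogues and omit the details.
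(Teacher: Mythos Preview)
Your proposal is correct and is precisely the standard generic-matrix argument the paper alludes to when it says the proof ``repeats that of the associative case.'' The paper omits the proof entirely, and what you have written is exactly the expected fill-in: the graded homomorphism $\pi$ sending $y_i\mapsto Y_i$, $z_j\mapsto Z_j$, surjectivity onto $(R,S)$, and identification of $\ker\pi$ with $Id(M_2(D),sl_2(D))$ via the ``infinite domain implies polynomial-vanishing-everywhere-is-zero'' step.
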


We recall some basic facts concerning partially well ordered sets. Let $(A, \le)$ be a partially ordered set. If $B\subseteq A$ then the closure $cl(B)$ in $A$ is the set of all $a\in A$ such that for some $b\in B$ one has $b\le a$. The subset $B$ is closed whenever $B=cl(B)$. The following statement can be found in Higman's paper \cite{higman}.

\begin{proposition}
\label{higman}
Let $A$ be a partially ordered set. The following conditions are equivalent.
\begin{enumerate}
\item
Every infinite sequence $a_1$, $a_2$, \dots, of elements of $A$, has an infinite subsequence $a_{i_1}\le a_{i_2}\le \cdots$ where $i_1<i_2<\cdots$;
\item
For every infinite sequence $a_1$, $a_2$, \dots, of elements of $A$ there exist indices $i<j$ such that $a_i\le a_j$;
\item
Every ascending chain of closed subsets of $A$ stabilizes;
\item
Every nonempty closed subset of $A$ is the closure of a finite subset of $A$;
\item
There are neither infinite decreasing sequences of elements of $A$ nor infinite sequences of pairwise incomparable elements of $A$;
\item
Every nonempty subset of $A$ has finitely many minimal elements.
\end{enumerate}
\end{proposition}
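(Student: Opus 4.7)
The plan is to verify the six conditions via the cycle $(1)\Rightarrow(2)\Rightarrow(5)\Rightarrow(1)$, supplemented by $(1)\Leftrightarrow(6)$ and $(1)\Leftrightarrow(3)\Leftrightarrow(4)$. The implication $(1)\Rightarrow(2)$ is tautological, while $(2)\Rightarrow(5)$ is immediate because an infinite strictly decreasing chain or an infinite antichain admits no pair $i<j$ with $a_i\le a_j$. For $(2)\Rightarrow(1)$ I would call an index $i$ \emph{terminal} when $a_i\not\le a_j$ for every $j>i$; condition (2) applied to the subsequence indexed by terminal indices forces this set to be finite, and beyond it one can iteratively choose $i_1<i_2<\cdots$ with $a_{i_k}\le a_{i_{k+1}}$, producing the required non-decreasing subsequence.

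The core obstacle would be $(5)\Rightarrow(2)$. I would invoke the infinite Ramsey theorem: if $(a_n)$ witnesses the failure of (2), two-color each pair $\{i,j\}$ with $i<j$ as ``strict'' if $a_j<a_i$ and ``incomparable'' if $a_i$ and $a_j$ are $\le$-incomparable. A monochromatic infinite subset produces either an infinite strictly decreasing subsequence or an infinite antichain, both contradicting (5). The equivalence $(1)\Leftrightarrow(6)$ is then routine: the minimal elements of any subset are pairwise incomparable, so if (6) fails the resulting infinite antichain violates (1); conversely (1) directly prohibits any infinite antichain from existing, and $(5)$ (already equivalent to $(1)$) forbids a set without minimal elements.

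For the closure-based conditions I would proceed as follows. To prove $(1)\Rightarrow(4)$, take $M$ to be the set of minimal elements of a nonempty closed $C$: $M$ is an antichain hence finite by (1), and every $c\in C$ dominates some $m\in M$ because any strictly descending chain in $C$ terminates by (5); thus $C=cl(M)$. For $(4)\Rightarrow(3)$, the union $C$ of an ascending chain of closed sets is itself closed, so $C=cl(F)$ for some finite $F$; since $F$ lies in some $C_N$, one obtains $cl(F)\subseteq C_N$ and hence $C=C_N$, so the chain stabilizes. Finally, for $(3)\Rightarrow(2)$, set $C_n=cl(\{a_1,\dots,a_n\})$ and let $N$ be the index where the chain stabilizes; then $a_{N+1}\in C_N$ produces $i\le N$ with $a_i\le a_{N+1}$. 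The Ramsey step is the only genuinely non-trivial ingredient; everything else is bookkeeping with the closure operator and elementary index extractions.
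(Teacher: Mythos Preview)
The paper does not supply a proof of this proposition: it is quoted from Higman's article with the preamble ``The following statement can be found in Higman's paper \cite{higman},'' and no argument is given. There is therefore nothing in the paper to compare your proof against; your write-up simply goes beyond what the authors chose to include.

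That said, your outline for the equivalences among (1)--(5) and for the closure-based conditions (3), (4) is correct and standard; the terminal-index argument for $(2)\Rightarrow(1)$ and the Ramsey step for $(5)\Rightarrow(2)$ are exactly the classical ones. The treatment of (6), however, is incomplete. You argue $(1)\Rightarrow(6)$ via the observation that minimal elements form an antichain, but the sentence beginning ``conversely'' does not establish $(6)\Rightarrow(1)$: it again invokes consequences of (1) and (5) rather than deducing anything from (6). In fact, condition (6) read literally---allowing a nonempty subset to have zero minimal elements---is strictly weaker than the others: $\mathbb{Z}$ with its usual order satisfies it yet is not partially well ordered. One must read (6) as asserting a finite \emph{and nonempty} set of minimal elements (equivalently, that every element dominates some minimal element); under that reading $(6)\Rightarrow(5)$ is immediate, since an infinite strictly descending chain gives a nonempty set with no minimal element, and an infinite antichain gives one with infinitely many. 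You should state this direction explicitly.
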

A partially ordered set $A$ satisfying one (and hence all) of the conditions of the proposition is called partially well ordered set (PWOS). 

The following theorem can also be found in \cite{higman}.

\begin{theorem}
\label{higman_pwos}
\begin{enumerate}
\item
Let $(A,\le)$ be a partially ordered set, and form the set $V(A)$ of all finite sequences of elements of $A$. Define a partial order $\le_s$ on $V(A)$ as follows: if $u$ and $v\in V(A)$ then $u\le_s v$ whenever there is a subsequence of $v$ that majorizes element-wise the sequence $u$. Then, if $(A,\le)$ is a PWOS so is $(V(A),\le_s)$.
\item
If $(A_1,\le_1)$ and $(A_2,\le_2)$ are two PWOS then their direct product $A=A_1\times A_2$ is a PWOS with respect to the partial order induced by $\le_1$ and $\le_2$: $(a_1, a_2)\le (a_1',a_2')$ whenever $a_1\le_1 a_1'$ and $a_2\le_2 a_2'$.
\end{enumerate}
\end{theorem}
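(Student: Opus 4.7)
For part (2), the plan is direct. Given any infinite sequence $(x_n,y_n)_{n\ge 1}$ in $A_1\times A_2$, I would apply condition (1) of Proposition \ref{higman} to $(x_n)$ in $A_1$ to extract an infinite nondecreasing subsequence $x_{n_1}\le_1 x_{n_2}\le_1\cdots$. Then the sequence $(y_{n_k})_{k\ge 1}$ lives in the PWOS $A_2$, so condition (2) of Proposition \ref{higman} yields indices $k<\ell$ with $y_{n_k}\le_2 y_{n_\ell}$. Combining these gives $(x_{n_k},y_{n_k})\le (x_{n_\ell},y_{n_\ell})$, verifying condition (2) of Proposition \ref{higman} for the product, hence it is a PWOS.

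For part (1), the plan is to argue by contradiction using a \emph{minimal bad sequence}, in the style of Nash-Williams. Call a sequence $u_1,u_2,\ldots$ in $V(A)$ \emph{bad} if $u_i\not\le_s u_j$ for all $i<j$. Assume, toward a contradiction, that there exists a bad sequence. I would construct a bad sequence $(u_n)$ recursively by choosing, at each step $n$, an element $u_n\in V(A)$ of minimum possible length among all $u$ such that $u_1,\ldots,u_{n-1},u$ can be extended to a bad sequence. Every $u_n$ must be nonempty, since the empty sequence is $\le_s$ anything, so write $u_n=(a_n,w_n)$ where $a_n$ is the first entry and $w_n\in V(A)$ is the tail, which has length strictly less than that of $u_n$.

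Since $(A,\le)$ is a PWOS, condition (1) of Proposition \ref{higman} applied to $(a_n)$ yields an infinite subsequence $a_{n_1}\le a_{n_2}\le\cdots$. I would then consider the hybrid sequence
\[
u_1,\,u_2,\,\ldots,\,u_{n_1-1},\,w_{n_1},\,w_{n_2},\,w_{n_3},\,\ldots
\]
Since $w_{n_1}$ has strictly smaller length than $u_{n_1}$, the minimality in the construction of $u_{n_1}$ forces this hybrid sequence \emph{not} to be bad. Hence there exist positions $i<j$ in this new sequence whose entries compare via $\le_s$. A case analysis resolves all three cases into a contradiction: if both positions lie in the original initial block $u_1,\ldots,u_{n_1-1}$, we contradict the badness of $(u_n)$; if $i$ lies in the initial block and $j$ selects some $w_{n_k}$, then $u_i\le_s w_{n_k}\le_s u_{n_k}$ (because any tail is $\le_s$ the whole sequence), again contradicting badness; if both select tails $w_{n_k}\le_s w_{n_\ell}$ with $k<\ell$, then combining with $a_{n_k}\le a_{n_\ell}$ and prepending the first entry shows $u_{n_k}=(a_{n_k},w_{n_k})\le_s (a_{n_\ell},w_{n_\ell})=u_{n_\ell}$, the final contradiction.

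The main obstacle, and the only subtle point, is the construction of the minimal bad sequence: one must justify that at each step a choice of minimum length exists, and that the recursion yields an honest bad sequence. This is handled by noting that the set of lengths of candidate extensions is a nonempty subset of $\mathbb{N}$, hence has a minimum, and by applying the axiom of dependent choice. The clean case analysis on the hybrid sequence — in particular observing that any suffix $w$ of an element $u\in V(A)$ satisfies $w\le_s u$ — is what makes the argument close.
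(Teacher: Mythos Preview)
The paper does not give its own proof of this theorem: it simply records the statement and attributes it to Higman's paper \cite{higman}. So there is nothing to compare your argument against in this paper.

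That said, your proposal is correct. Part~(2) is the standard two-step extraction (pass to a subsequence monotone in the first coordinate, then find a comparable pair in the second), and part~(1) is exactly the Nash--Williams minimal bad sequence proof of Higman's lemma. The case analysis is clean; the key facts you use --- that a tail $w$ of $u$ satisfies $w\le_s u$, and that $a\le a'$ together with $w\le_s w'$ implies $(a,w)\le_s(a',w')$ --- are immediate from the definition of $\le_s$. The only point worth a sentence of care in a full write-up is verifying that the recursively constructed minimal sequence really is bad (not merely that each finite initial segment extends to a bad sequence); this follows since any comparable pair $u_i\le_s u_j$ with $i<j$ would already witness non-extendability at stage $j$.
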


\section{A basis for the graded identities}

In this section we fix an infinite unital domain $D$. 
Recall that $X=Y\cup Z$ where the variables from $Y$ are even, and those of $Z$ are odd. 
\begin{lemma}
\label{basicids}
The following graded identities hold for the graded pair $(M_2(D), sl_2(D))$:
\[
[y_1,y_2], \qquad z_1z_2z_3-z_3z_2z_1, \qquad yz+zy.
\]
\end{lemma}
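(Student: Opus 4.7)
The plan is to verify each of the three graded identities by direct evaluation on arbitrary homogeneous substitutions. Recall that $sl_2(D)^{(0)}$ consists of the diagonal traceless matrices $\begin{pmatrix} a & 0 \\ 0 & -a \end{pmatrix}$ with $a \in D$, while $sl_2(D)^{(1)}$ consists of the off-diagonal matrices $\begin{pmatrix} 0 & b \\ c & 0 \end{pmatrix}$ with $b, c \in D$. A graded substitution therefore amounts to choosing such parameters for each variable, and each claim reduces to a $2 \times 2$ matrix computation in $M_2(D)$.

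For $[y_1, y_2]$, the even component $sl_2(D)^{(0)}$ lies inside the commutative subalgebra of diagonal matrices, so the commutator of any two of its elements is zero. For $yz + zy$, multiplying a diagonal traceless $y$ against an off-diagonal $z$ on either side yields two off-diagonal matrices whose $(1,2)$ and $(2,1)$ entries are negatives of one another, and their sum vanishes. For $z_1 z_2 z_3 - z_3 z_2 z_1$, a product of three off-diagonal matrices is again off-diagonal, and its two nonzero entries are monomials of the form $b_1 c_2 b_3$ and $c_1 b_2 c_3$ (respectively $b_3 c_2 b_1$ and $c_3 b_2 c_1$ for the reversed product); since $D$ is commutative these pairs coincide, so the difference vanishes.

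The only nontrivial ingredient anywhere is the commutativity of $D$, used in the third identity; otherwise the lemma is a routine verification and presents no real obstacle. Its purpose in the paper is to single out the three basic graded weak identities that will subsequently be shown to generate the full weak T-ideal $Id(M_2(D), sl_2(D))$, so this lemma merely records one direction (containment) of that eventual equality.
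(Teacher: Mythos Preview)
Your proof is correct and is precisely the direct verification the paper has in mind; indeed the paper omits the proof entirely, saying only that it ``consists of a direct (and easy) verification.'' There is nothing to add.
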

\begin{proof}
Omitted since it consists of a direct (and easy) verification.
\end{proof}

We denote by $I$ the ideal of weak graded identities generated by the identities from the above lemma. Let $B$ be the set of all monomials of the form
\begin{equation}
\label{basisrfa}
y_{a_1} y_{a_2}\cdots y_{a_k}, \qquad y_{a_1} y_{a_2}\cdots y_{a_k} z_{i_1}z_{j_1} \cdots z_{i_n}\widehat{z_{j_n}},
\end{equation}
where $k\ge 0$, $n\ge 0$, $a_1\le a_2\le \cdots\le a_k$, $i_1\le i_2\le \cdots\le i_n$, $j_1\le j_2\le \cdots\le j_n$, and the ``hat'' over a variable (or an expression) means that it may be missing. In case $k=n=0$ the resulting element is 1.

\begin{lemma}
\label{span}
The $D$-module $D\langle X\rangle/I$ is spanned by the images of the elements from Eq.~(\ref{basisrfa}).
\end{lemma}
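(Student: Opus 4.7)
The plan is to take an arbitrary monomial in $D\langle X\rangle$ and reduce it, modulo $I$, to a scalar multiple of one of the monomials displayed in Eq.~(\ref{basisrfa}). The argument proceeds in three bubble-sort stages: (i) push all even variables to the left of all odd variables using the identity $yz+zy$; (ii) sort the block of even variables using $[y_1,y_2]$; and (iii) rearrange the block of odd variables into the required alternating form using $z_1z_2z_3-z_3z_2z_1$.

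For (i), since $I$ is a two-sided homogeneous ideal, $g(y_iz_j+z_jy_i)h\in I$ for arbitrary monomials $g,h$, so any adjacency $z_jy_i$ inside a monomial may be replaced modulo $I$ by $-y_iz_j$; iterating shows that any monomial is congruent to $\pm$(product of its $y$'s, in their original relative order)$\cdot$(product of its $z$'s, in their original relative order). For (ii), $[y_1,y_2]\in I$ and the same ideal argument give $y_ay_b\equiv y_by_a\pmod{I}$, so the $y$-block may be sorted into non-decreasing order $y_{a_1}\cdots y_{a_k}$.

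Step (iii) is the crux. Substituting $z_1\mapsto z_{b_k}$, $z_2\mapsto z_{b_{k+1}}$, $z_3\mapsto z_{b_{k+2}}$ into $z_1z_2z_3-z_3z_2z_1$ and multiplying by suitable monomials on each side yields, for any $1\le k\le n-2$,
\[
z_{b_1}\cdots z_{b_k}z_{b_{k+1}}z_{b_{k+2}}\cdots z_{b_n} \;\equiv\; z_{b_1}\cdots z_{b_{k+2}}z_{b_{k+1}}z_{b_k}\cdots z_{b_n} \pmod{I};
\]
that is, we may swap the entries at positions $k$ and $k+2$ while fixing position $k+1$. The group generated by the transpositions $(k,k+2)$ acts as the full symmetric group on the odd-position indices of $\{1,\ldots,n\}$ and, independently, on the even-position indices, so we may sort the odd-position entries and the even-position entries of $z_{b_1}\cdots z_{b_n}$ separately into non-decreasing order. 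Relabelling $i_r:=b_{2r-1}$ and $j_r:=b_{2r}$ (where defined) produces exactly the monomial in Eq.~(\ref{basisrfa}), with the terminal $z_{j_n}$ absent precisely when $n$ is odd. The main obstacle is this third step: unlike in (ii), the weak identity $z_1z_2z_3-z_3z_2z_1$ is strictly weaker than commutativity and cannot fully symmetrize a product of odd variables, and the parity-dependent alternating form of Eq.~(\ref{basisrfa}) is precisely the best spanning set this identity can deliver.
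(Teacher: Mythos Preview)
Your proof is correct and follows the same three-step strategy as the paper's own (very terse) proof: use $yz+zy$ to push the $y$'s to the left, use $[y_1,y_2]$ to sort them, and use $z_1z_2z_3-z_3z_2z_1$ to arrange the $z$-block. Your treatment of step~(iii)---observing that the identity yields the transpositions $(k,k+2)$ and that these generate the full symmetric groups on the odd-position and even-position indices separately---is exactly the mechanism the paper leaves implicit when it says ``the second identity gives us the correct order for the variables~$z$.''
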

\begin{proof}
By using several times the third identity from Lemma~\ref{basicids} we can put the variables $y$ at the beginning of each monomial in $D\langle X\rangle/I$. Then we use the first identity to order the variables $y$. Then the second identity gives us the correct order for the variables $z$.
\end{proof}
\begin{proposition}
The images of the elements from Eq.~(\ref{basisrfa}) form a basis of the $D$-module $D\langle X\rangle/I$.
\end{proposition}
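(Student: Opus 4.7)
The plan is to prove linear independence by evaluating in the generic model $(R,S)\subseteq(M_2(A),sl_2(A))$. Since spanning is already given by Lemma~\ref{span}, I only need to show that the images in $D\langle X\rangle/I$ of distinct monomials of the list~(\ref{basisrfa}) are $D$-linearly independent.

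First, I would observe that by Lemma~\ref{basicids}, the generators of $I$ are graded identities for the pair $(M_2(D), sl_2(D))$, so $I\subseteq Id(M_2(D), sl_2(D))$. By the preceding proposition on the relatively free pair, this gives a canonical graded homomorphism $\pi\colon D\langle X\rangle/I \to R$ sending $y_i+I\mapsto Y_i$ and $z_i+I\mapsto Z_i$. It suffices to show that the images under $\pi$ of the monomials in~(\ref{basisrfa}) are linearly independent over $D$ inside $M_2(A)$.

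Next I would compute these images explicitly. Using
\[
Y_{a_1}\cdots Y_{a_k} = \begin{pmatrix}\alpha_{a_1}\cdots\alpha_{a_k} & 0\\ 0 & (-1)^k\alpha_{a_1}\cdots\alpha_{a_k}\end{pmatrix},
\]
and, by induction on $n$,
\[
Z_{i_1}Z_{j_1}\cdots Z_{i_n}Z_{j_n} = \begin{pmatrix}\prod_{t=1}^{n}\beta_{i_t}\gamma_{j_t} & 0\\ 0 & \prod_{t=1}^{n}\gamma_{i_t}\beta_{j_t}\end{pmatrix},
\]
together with the analogous off-diagonal computation for the case when $z_{j_n}$ is omitted, one obtains that each basis monomial in~(\ref{basisrfa}) is sent either to a diagonal matrix (when the number of $z$-factors is even) or to an off-diagonal matrix (when it is odd). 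In each case the nonzero entries of $\pi(m)$ for a basis monomial $m$ are, up to a sign depending only on the parity of $k$, the commutative monomials
\[
\alpha_{a_1}\cdots\alpha_{a_k}\,\beta_{i_1}\cdots\beta_{i_n}\,\gamma_{j_1}\cdots\gamma_{j_n} \quad\text{and}\quad \alpha_{a_1}\cdots\alpha_{a_k}\,\gamma_{i_1}\cdots\gamma_{i_n}\,\beta_{j_1}\cdots\beta_{j_n}
\]
in the polynomial algebra $A=D[\alpha_i,\beta_i,\gamma_i]$ (with the appropriate adjustment for the omitted $z_{j_n}$).

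The key point is then that the orderings $a_1\le\cdots\le a_k$, $i_1\le\cdots\le i_n$ and $j_1\le\cdots\le j_n$ built into~(\ref{basisrfa}) ensure that distinct basis monomials $m\neq m'$ produce distinct commutative monomials in the $(1,1)$-entry (or $(1,2)$-entry in the odd case) of $\pi(m)$, because a non-decreasing sequence is uniquely determined by its underlying multiset. Since $A$ is a free $D$-module on the set of commutative monomials in the $\alpha_i,\beta_i,\gamma_i$, any hypothetical nontrivial $D$-linear relation among the $\pi(m)$ would, by reading off a single matrix entry and working separately in the even and odd graded components, yield a nontrivial $D$-linear relation among pairwise distinct monomials of $A$, which is impossible. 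Therefore the images are linearly independent in $R$, hence in $D\langle X\rangle/I$, completing the proof.

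The only mildly delicate point is bookkeeping: checking that distinct ordered index sequences $(a_1,\ldots,a_k; i_1,\ldots,i_n; j_1,\ldots,j_n)$ really do yield distinct commutative monomials in the relevant matrix entry, and handling the parity separation (even versus odd number of $z$'s) so that cancellations between diagonal and off-diagonal images cannot occur. This separation is automatic from the $\mathbb{Z}_2$-grading on $R$, and the bookkeeping is routine; no combinatorial obstacle beyond this arises.
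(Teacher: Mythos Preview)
Your proposal is correct and follows essentially the same route as the paper: evaluate the monomials of~(\ref{basisrfa}) on the generic matrices $Y_i$, $Z_i$ and observe that from the resulting entries in $M_2(A)$ one can uniquely recover the ordered index sequences, so distinct basis monomials are mapped to $D$-linearly independent elements. The paper phrases this slightly differently (first reducing to multihomogeneous components and treating the $y$-only, $z$-only, and mixed cases in turn), but the substance is the same; your single-entry bookkeeping via the $(1,1)$ or $(1,2)$ position is a clean way to package the same computation.
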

With certain abuse of notation we shall denote by the same letter $B$ the set of these images. 

\begin{proof}
Since $D$ is infinite we can consider the multihomogeneous components, in the usual sense, of $D\langle X\rangle/I$. 
We use an argument similar to that of \cite[Proposition 5]{pkssa}. If $m_1=y_{i_1}y_{i_2}\cdots y_{i_k}$ is a monomial, we evaluate it on the generic matrices $Y_{i_1}$, $Y_{i_2}$, \dots, $Y_{i_k}$:
\[
m_1(Y_{i_1}, Y_{i_2}, \ldots, Y_{i_k}) = \begin{pmatrix} \alpha_{i_1} \alpha_{i_2}\cdots \alpha_{i_k} & 0 \\ 0&(-1)^k \alpha_{i_1} \alpha_{i_2}\cdots \alpha_{i_k}\end{pmatrix}.
\]
This means that the monomials depending on variables $y$ only are linearly independent. Pay attention that in the evaluation of these monomials the variables $\beta$ and $\gamma$ do not appear in the entries of the corresponding matrix. 

Now suppose $m_2=z_{i_1}z_{j_1}\cdots z_{i_n}\widehat{z_{j_n}}$ where $k=0$, that is no variables $y$ appear in $m_2$. Once again evaluating on the generic matrices $Z$ we obtain
\[
m_2(Z_{i_1}, Z_{j_1}, \ldots, Z_{i_n}, \widehat{Z_{j_n}}) = 
\begin{cases} (\beta_{i_1}\gamma_{j_1} \cdots \beta_{i_n}\gamma_{i_n})e_{11}+(\gamma_{i_1}\beta_{j_1}\cdots \gamma_{i_n}\beta_{j_n}) e_{22}, \\
 (\beta_{i_1}\gamma_{j_1} \cdots \beta_{i_n})e_{12}+(\gamma_{i_1}\beta_{j_1}\cdots \gamma_{i_n}) e_{21},
\end{cases}
\]
depending on whether $z_{j_n}$ appears, or not,  in $m_2$. Clearly the above evaluation recovers uniquely the monomial $m_2$, and in this way we have that the monomials are linearly independent. Pay attention that no variable $\alpha$ appears in the entries of these matrices.

Now consider the general case, where $m$ is a monomial with variables $y$ and $z$. Then the variables $\alpha$ in the entries of the corresponding matrix identify uniquely the graded variables $y$, and as above we can recover the order of the graded variables $z$, thus proving the statement of the proposition. 
\end{proof}

\begin{corollary}
Let $D$ be an infinite unital domain, then the graded identities from Lemma~\ref{basicids} form a basis of the graded identities for the pair $(M_2(D), sl_2(D))$.
\end{corollary}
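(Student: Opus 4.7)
The plan is to deduce the Corollary by combining the three ingredients already in place: Lemma~\ref{basicids} giving $I\subseteq Id(M_2(D),sl_2(D))$; the first Proposition of the paper identifying $(R,S)$ as the relatively free graded pair in the variety determined by $Id(M_2(D),sl_2(D))$; and the Proposition just proved, which exhibits $B$ as a $D$-module basis of $D\langle X\rangle/I$. The strategy is to observe that the proof of the latter Proposition actually produces linear independence inside the generic model $R$, which is exactly what is needed to force the reverse inclusion $Id(M_2(D),sl_2(D))\subseteq I$.

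First, I would set up the canonical graded $D$-algebra homomorphism $\pi\colon D\langle X\rangle\to R$ defined by $y_i\mapsto Y_i$ and $z_i\mapsto Z_i$. Because $(R,S)$ is relatively free in the variety determined by $Id(M_2(D),sl_2(D))$, one has $\ker\pi=Id(M_2(D),sl_2(D))$. By Lemma~\ref{basicids}, $I\subseteq\ker\pi$, so $\pi$ descends to a surjective graded $D$-algebra homomorphism $\bar\pi\colon D\langle X\rangle/I\to R$.

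Next, I would invoke the preceding Proposition, which says that (the images of) $B$ form a $D$-module basis of $D\langle X\rangle/I$. The key point is that the proof of that Proposition establishes linear independence by evaluating the monomials of $B$ on the generic matrices $Y_i$, $Z_i$ and computing the resulting entries in the polynomial ring $A=D[\alpha_i,\beta_i,\gamma_i]$; the three case analyses (pure-$y$, pure-$z$, and mixed monomials) show that distinct elements of $B$ produce linearly independent matrices in $M_2(A)$, hence linearly independent elements of $R$. Therefore $\bar\pi$ carries a $D$-basis to a $D$-linearly independent set, so $\bar\pi$ is injective, and thus an isomorphism. It follows that $I=\ker\pi=Id(M_2(D),sl_2(D))$, so the three identities of Lemma~\ref{basicids} generate the full weak T-ideal of graded identities of the pair.

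There is essentially no real obstacle: the heavy lifting is already contained in the proof of the Proposition, and only the remark that the linear independence was in fact established in $R$ (not merely abstractly in $D\langle X\rangle/I$) is needed to close the argument.
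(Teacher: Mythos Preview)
Your proposal is correct and is precisely the argument the paper leaves implicit: the Corollary is stated without proof immediately after the Proposition, and your write-up simply makes explicit the observation that the linear independence of $B$ was established by evaluation on the generic matrices $Y_i,Z_i$, hence inside $R$, forcing the induced map $D\langle X\rangle/I\to R$ to be an isomorphism and thus $I=Id(M_2(D),sl_2(D))$.
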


\section{The Specht property}

In this section we require $D$ to be an infinite unital Noetherian domain. We recall that $I$ is the ideal of weak graded identities generated by the polynomials from Lemma~\ref{basicids}. We split the basis $B$ of the free $D$-module $D\langle X\rangle/I$ into two subsets:
\[
M_1=\{y_{a_1} y_{a_1}\cdots y_{a_k}\}, \quad M_2=\{y_{a_1} y_{a_1}\cdots y_{a_k} z_{c_1}z_{d_1}\cdots z_{c_m}\widehat{z_{d_m}}\},
\]
where $a_1\le a_2\le\cdots\le a_k$, $k\ge 0$, $c_1\le c_2\le\cdots\le c_m$, $d_1\le d_2\le\cdots\le d_m$, $m\ge 1$. 

The set $\mathbb{N}_0=\mathbb{N}\cup \{0\}$ has a natural order $\le$. We form the set $Q_n=\mathbb{N}_0^n$ of the $n$-tuples of nonnegative integers, and define on $Q_n$ a partial order, denoted by the same symbol $\le$, as follows: $(a_1, \ldots, a_n)\le (b_1,\ldots,b_n)$ whenever $a_i\le b_i$ for every $i$. Since $\mathbb{N}_0$ is a PWOS then Theorem~\ref{higman_pwos} implies that $(Q_n, \le)$ is also a PWOS. Once again Theorem~\ref{higman_pwos} yields that the set of the finite sequences $V(Q_n)$ with the order $\le_s$ is a PWOS.

We consider the set $V(Q_1)$, and we define an order $\le$ on it as follows: if $\{u_i\}$ and $\{v_i\}$ are two finite sequences of nonnegative integers, we put $\{u_i\}\le \{v_i\}$ whenever either $\{u_i\} = \{v_i\}$ or there exists $k>0$ such that $u_k<v_k$ and $u_j=v_j$ for every $j>k$. (If necessary we complete the shorter of the two sequences with zeros on the right-hand side.) Then the set $(V(Q_1), \le)$ is linearly well ordered (that is the order is linear and every nonempty subset has the least element). If we rewrite the set of the monomials $M_1$ as $M_1=\{y_1^{u_1}y_2^{u_2}\cdots y_n^{u_n}\mid u_i\ge 0\}$ then clearly $M_1$ is in a 1--1 correspondence with the set $V(Q_1)$, and the order on the latter is a slight modification of the right lexicographic order on $M_1$. 

There is another natural 1--1 correspondence, this time of the sets $V(Q_n)$ and $V(Q_1)^n$: given an $n$-tuple of finite sequences $u=(u_i^{(1)}, \ldots, u_i^{(n)})\in V(Q_n)$ we associate to it $v\in V(Q_1)^n$ as follows: $u=(u_i^{(1)}, \ldots, u_i^{(n)}) \leftrightarrow v= (\{u_i^{(1)}\}, \ldots, \{u_i^{(n)}\})$. We shall use this bijection to identify the two sets when necessary.

In what follows we use some ideas from \cite{evander}. Define a function $\partial\colon V(Q_n)\to \mathbb{N}_0$ by $\partial(u) = \partial(\{u_i^{(1)}\}, \ldots, \{u_i^{(n)}\}) = \sum_{i,j} u_i^{(j)}$. Here we identify $V(Q_n)$ with $V(Q_1)^n$; the sum on the right-hand side is finite. Now take $u=(\{u_i^{(1)}\},  \{u_i^{(2)}\})$, $v=(\{v_i^{(1)}\}, \{v_i^{(2)}\})\in V(Q_2)$. We define a partial order $\le$ on $V(Q_2)$ by means of $u<v$ whenever either $\partial(u)<\partial(v)$, or $\partial(u)=\partial(v)$ but $\{u_i^{(2)}\}<\{v_i^{(2)}\}$, or else $\partial(u)=\partial(v)$, $\{u_i^{(2)}\}=\{v_i^{(2)}\}$, but $\{u_i^{(1)}\}<\{v_i^{(1)}\}$. Here the latter order is that on $V(Q_1)$. An easy verification shows that $\le$ turns $V(Q_2)$ into a linearly well ordered set. We use this order to define one on $V(Q_n)$. 

Suppose $u=(\{u_i^{(1)}\}, \ldots, \{u_i^{(n)}\})$, $v=(\{v_i^{(1)}\}, \ldots, \{v_i^{(n)}\})\in V(Q_n)$, $n\ge 3$. Then define $u<v$ whenever either $(\{u_i^{(n-1)}\}, \{u_i^{(n)}\})< (\{v_i^{(n-1)}\}, \{v_i^{(n)}\})$ as elements of $V(Q_2)$, or $(\{u_i^{(n-1)}\}, \{u_i^{(n)}\})= (\{v_i^{(n-1)}\}, \{v_i^{(n)}\})$ but there exists $t_0$ such that $\{u_i^{(t_0)}\} < \{v_i^{(t_0)}\}$ and $\{u_i^{(t)}\} = \{v_i^{(t)}\}$ for every $t>t_0$. Then this gives us that $V(Q_n)$ is linearly well ordered set. 

We form the sets $V_1=V(Q_1)$, $V_2=\{v\in V(Q_3)\mid \partial(v_i^{(2)})>0, \partial(v_i^{(2)})- \partial(v_i^{(3)})=0 \text{ or } 1\}$, and finally $V=V_1\cup V_2$. The sets $V_1$ and $V_2$ inherit the corresponding orders from $V(Q_1)$ and from $V(Q_3)$, respectively. Thus we define an order $\le$ on $V$ by using the ones on $V_1$ and on $V_2$, and by declaring $V_1<V_2$.

\begin{lemma}
\label{Vislwo}
The set $(V,\le)$ is linearly well ordered.
\end{lemma}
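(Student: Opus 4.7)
The plan is to verify directly the two conditions defining a linearly well ordered set, namely totality of $\le$ and existence of a minimum in every nonempty subset, by reducing each to facts already established for $V_1 = V(Q_1)$ and for $V(Q_3) \supseteq V_2$. Both $V(Q_1)$ and $V(Q_3)$ were shown to be linearly well ordered by the inductive construction preceding the lemma, and the only new ingredient in $V$ is the rule $V_1 < V_2$ that compares elements across the two parts.

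For totality, I would take two arbitrary elements $u, v \in V$ and split into three cases: $u, v \in V_1$, $u, v \in V_2$, or one lies in each part. In the first two cases comparability is immediate from the linear orders on $V(Q_1)$ and on $V(Q_3)$, the latter restricted to $V_2$. In the mixed case the convention $V_1 < V_2$ decides the comparison. Hence $\le$ is a total order on $V$.

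For the well-foundedness property, let $S \subseteq V$ be nonempty. If $S \cap V_1 \neq \emptyset$, then since $V_1$ is linearly well ordered the set $S \cap V_1$ has a least element $s_0$; because every element of $V_2$ exceeds every element of $V_1$ under the ordering on $V$, this $s_0$ is the minimum of all of $S$. Otherwise $S \subseteq V_2$, and as a nonempty subset of the linearly well ordered set $V(Q_3)$ it automatically has a minimum.

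I do not anticipate any genuine obstacle here: the content of the lemma is simply the standard observation that the ordinal sum of two linearly well ordered sets is again linearly well ordered. The only point that deserves an explicit word is that $V_2$ inherits a linear well-ordering as a subset of $V(Q_3)$, which is automatic since any subset of a linearly well ordered set is itself linearly well ordered under the restricted order. Consequently the bulk of the work has already been carried out in the construction of the orders on $V(Q_n)$ above, and the proof of Lemma~\ref{Vislwo} is essentially a bookkeeping verification.
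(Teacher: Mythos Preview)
Your proposal is correct and follows exactly the same approach as the paper: both argue that $\le$ is total (the paper simply asserts this as clear, while you spell out the three cases), and both prove well-foundedness by splitting on whether $W\cap V_1$ is nonempty, using the well-ordering of $V_1$ in the first case and of $V_2\subseteq V(Q_3)$ in the second.
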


\begin{proof}
Clearly $\le$ is a linear order. If $\emptyset\ne W\subset V$ and $W\cap V_1\ne\emptyset$ then this intersection has its least element which is the least element in $W$. Otherwise, if $W\cap V_1=\emptyset$ then $W\subseteq V_2$ and $W$ has its least element.
\end{proof}

We define another order $\le'$ on $V$ as follows. If $u$, $v\in V$ then $u\le' v$ if $u$, $v\in V_i$ for some $i=1$, 2, and $u\le_s v$ in the order of $V_i$ (see Theorem \ref{higman_pwos} for the definition of $\le_s$).

\begin{lemma}
\label{Vispwos}
The set $(V, \le')$ is a PWOS.
\end{lemma}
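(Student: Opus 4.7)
The plan is to verify condition~(2) of Proposition~\ref{higman} for $(V, \le')$ by first proving that each piece $V_1$, $V_2$ is a PWOS under the relevant restriction of $\le'$ (which coincides with $\le_s$ on each piece), and then handling an arbitrary mixed sequence by a pigeonhole argument.

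For $V_1 = V(Q_1)$, part~(1) of Theorem~\ref{higman_pwos} applied to the PWOS $Q_1 = \mathbb{N}_0$ gives the statement directly. For $V_2$, I would first iterate part~(2) of Theorem~\ref{higman_pwos} to see that $Q_3 = \mathbb{N}_0^3$ is a PWOS under the coordinatewise order, then apply part~(1) to deduce that $(V(Q_3), \le_s)$ is a PWOS, and finally observe that the PWOS property descends to arbitrary subsets (an infinite sequence in a subset is, in particular, an infinite sequence in the ambient PWOS, so contains an increasing pair by condition~(2) of Proposition~\ref{higman}), whence $V_2 \subseteq V(Q_3)$ inherits the property.

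To conclude, I would take an arbitrary infinite sequence $u_1, u_2, \ldots \in V$ and pigeonhole on the partition $V = V_1 \cup V_2$ to extract an infinite subsequence $u_{n_1}, u_{n_2}, \ldots$ lying entirely in some $V_k$. By the PWOS property of $V_k$ there exist indices $n_i < n_j$ with $u_{n_i} \le_s u_{n_j}$ in $V_k$, and hence $u_{n_i} \le' u_{n_j}$ in $V$ by the very definition of $\le'$. No genuine obstacle appears in the argument; the one subtlety worth flagging is that $\le'$ makes elements of $V_1$ incomparable with elements of $V_2$, but this is precisely what permits (and forces) the reduction to a monochromatic subsequence.
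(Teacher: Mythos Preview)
Your proof is correct and follows essentially the same approach as the paper's: pigeonhole an arbitrary infinite sequence into one of the two pieces $V_1$, $V_2$, then invoke the PWOS property of that piece (under $\le_s$) together with Proposition~\ref{higman}. The paper's proof is terser, merely asserting that each $V_i$ has the required property, whereas you explicitly justify this via Theorem~\ref{higman_pwos} and the observation that PWOS descends to subsets; this extra detail is welcome but does not constitute a different method.
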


\begin{proof}
Clearly $\le'$ is a partial order. If $v=\{v_i\}$ is an infinite sequence of elements of $V$ then it has an infinite subsequence of elements of the same $V_i$. But every infinite sequence of elements of $V_1$ or of $V_2$ has an infinite increasing (with possible repetitions) subsequence. Then we apply Proposition~\ref{higman}.
\end{proof}

For every monotone injection $\varphi\colon\mathbb{N}_0\to\mathbb{N}_0$, we define, abusing with the notation, a function $\varphi\colon V(Q_n)\to V(Q_n)$ as follows: $\varphi(\{u_i\})=\{v_i\}$ where $v_j=\varphi(u_i)$ whenever $j=\varphi(i)$ for some $i$, and $v_j=0$ otherwise. We denote by $\Phi_1$ the set of all functions $\varphi_1\colon V(Q_n)\to V(Q_n)$ such that $\varphi_1=\varphi$ in case $n=1$ or 2, and $\varphi_1(u,v) = (\varphi(u),v)$ if $n>2$, where $u\in V(Q_{n-2})$, $v\in V(Q_2)$. Furthermore we define $\Phi_2$ as the set of all $\varphi_2\colon V(Q_n)\to V(Q_n)$ such that $\varphi_2$ is the identity on $V(Q_1)$ and on $V(Q_2)$, and in case $n>2$, put $\varphi_2(u,v) = (u, \varphi(v))$ for $u\in V(Q_{n-2})$, $v\in V(Q_2)$.

For every monotone injection $\varphi\colon \mathbb{N}_0\to\mathbb{N}_0$ we define an endomorphism $\widetilde{\varphi}$ of the algebra $D\langle X\rangle/I$ as $\widetilde{\varphi}(x_i+I) = x_{\varphi(i)}+I$, for each $x_i\in X=Y\cup Z$. Let $\widetilde{\Phi}$ be the set of these endomorphisms. We define further $\widetilde{\Phi}_1$ as the set of the endomorphisms of $D\langle X\rangle/I$ mapping the $y_i+I$ into $y_{\varphi(i)}+I$ and fixing the $z_i+I$. Moreover $\widetilde{\Phi}_2$ is the set of the endomorphisms of $D\langle X\rangle/I$ that are the identity on $y_i+I$, and that map $z_i+I$ to $z_{\varphi(i)}+I$ for every $i$. 

The set of monomials $B$ was defined in Eq.~(\ref{basisrfa}), and we put $B_1$ to be its subset formed by the $y_{a_1}y_{a_2}\cdots y_{a_k}$, $k\ge 0$, and by $B_2=B\setminus B_1$. In order to keep the notation simpler we use the same letters $B$, $B_1$, $B_2$ for these monomials and for their images under the canonical projection onto $D\langle X\rangle/I$. We define the function $\xi\colon B\to V$ by means of
\begin{align}
\label{xi}
\xi(y_{a_1}y_{a_2}\cdots y_{a_k}) &= \{u_i^{(1)}\},  \\
\xi(y_{a_1}y_{a_2}\cdots y_{a_k}z_{c_1}z_{d_1}\cdots z_{c_m}\widehat{z_{d_m}})&= (\{u_i^{(1)}\}, \{u_i^{(2)}\}, \{u_i^{(3)}\}).\nonumber
\end{align}
Here $u_i^{(1)}= \sum_{a_j=i} 1$, $u_i^{(2)} = \sum_{c_j=i}1$, $u_i^{(3)} = \sum_{d_j=i}1$. Clearly $\xi$ is a bijection of $B$ and $V$; we use this in order to transfer the orders $\le$ and $\le'$ from $V$ to $B$.

\begin{lemma}
\label{comp}
For every monomial $M\in B$ one has $\xi(\widetilde{\varphi}(M)) = \varphi(\xi(M))$ and $\xi(\widetilde{\varphi}_i(M)) = \varphi_i(\xi(M))$ for each $\widetilde{\varphi}\in \widetilde{\Phi}$, $\widetilde{\varphi}_i \in \widetilde{\Phi}_i$, $i=1$, 2.
\end{lemma}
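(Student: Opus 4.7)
The plan is to unwind the definitions on both sides and verify the equalities coordinate by coordinate in the multiplicity sequences. The key observations are: (i) a monotone injection $\varphi$ preserves the canonical ordering required of monomials in $B$, so $\widetilde{\varphi}(M)$, $\widetilde{\varphi}_1(M)$, $\widetilde{\varphi}_2(M)$ are again in $B$ and $\xi$ is defined on them; and (ii) the function $\xi$ converts ``relabeling of variables'' into ``relabeling of positions in the multiplicity sequences'', so the statement is essentially that $\xi$ is equivariant for the natural actions of monotone injections on $B$ and on $V$.

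First I would treat $M = y_{a_1}y_{a_2}\cdots y_{a_k}\in B_1$, where $\xi(M)=\{u_i^{(1)}\}$ with $u_i^{(1)}=\#\{j:a_j=i\}$. Applying $\widetilde{\varphi}$ gives $\widetilde{\varphi}(M)=y_{\varphi(a_1)}\cdots y_{\varphi(a_k)}$, still canonical because $\varphi$ is order-preserving. The sequence $\xi(\widetilde{\varphi}(M))$ has at position $j$ the count $\#\{s:\varphi(a_s)=j\}$; by injectivity of $\varphi$, this equals $u_i^{(1)}$ exactly when $j=\varphi(i)$ and $0$ otherwise, which is precisely $\varphi(\xi(M))$. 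The statement for $\widetilde{\varphi}_1$ reduces to the same computation because on $V(Q_1)$ the map $\varphi_1$ is just $\varphi$, and for $\widetilde{\varphi}_2$ both sides are the identity on $B_1$ (since $\widetilde{\varphi}_2$ fixes the $y$'s and $\varphi_2$ is the identity on $V(Q_1)$).

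Second I would treat $M=y_{a_1}\cdots y_{a_k}z_{c_1}z_{d_1}\cdots z_{c_m}\widehat{z_{d_m}}\in B_2$, so $\xi(M)=(\{u_i^{(1)}\},\{u_i^{(2)}\},\{u_i^{(3)}\})\in V(Q_3)$. The three multiplicity sequences record indices coming from three disjoint parts of the monomial (the $y$'s, the odd-positioned $z$'s, and the even-positioned $z$'s), and $\widetilde{\varphi}$ simultaneously relabels all three. Hence the argument from the $B_1$ case, applied in parallel to each of the three coordinates, yields $\xi(\widetilde{\varphi}(M))=\varphi(\xi(M))$ under the coordinate-wise action of $\varphi$ on $V(Q_3)\simeq V(Q_1)^3$. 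For $\widetilde{\varphi}_1$ only the $y$-indices are moved, which matches $\varphi_1$ acting on the first component of $V(Q_3)$ under the identification $V(Q_3)=V(Q_1)\times V(Q_2)$ (first coordinate versus last two); for $\widetilde{\varphi}_2$ only the $z$-indices are moved, matching $\varphi_2$ acting on the last two components.

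There is no substantive obstacle — the lemma is a bookkeeping identity asserting that the bijection $\xi$ intertwines the two actions. The only care needed is to confirm that the decomposition $V(Q_n)=V(Q_{n-2})\times V(Q_2)$ used in the definitions of $\Phi_1$ and $\Phi_2$ separates precisely the $y$-data from the $z$-data of $\xi(M)$ as written in Eq.~(\ref{xi}); once this is observed, everything reduces to the single-coordinate calculation carried out in the first paragraph.
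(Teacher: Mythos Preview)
Your proposal is correct and follows essentially the same approach as the paper: both arguments reduce to the elementary observation that relabeling variable indices by a monotone injection $\varphi$ corresponds to shifting the positions of the multiplicity sequences by $\varphi$, and then verify this coordinate by coordinate for the $y$-part and the two $z$-parts. The only organizational difference is that the paper first reduces the $\widetilde{\varphi}$ case to the $\widetilde{\varphi}_i$ cases via $\widetilde{\varphi}=\widetilde{\varphi}_1\circ\widetilde{\varphi}_2$ and then checks the four cases $(M\in B_1\text{ or }B_2)\times(i=1\text{ or }2)$ by explicit computation, whereas you handle $\widetilde{\varphi}$ directly and treat $\widetilde{\varphi}_1,\widetilde{\varphi}_2$ as restrictions of that computation; the content is the same.
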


\begin{proof}
We have $\widetilde{\varphi}= \widetilde{\varphi}_1\circ \widetilde{\varphi}_2 = \widetilde{\varphi}_2\circ \widetilde{\varphi}_1$, hence it suffices to prove that $\xi(\widetilde{\varphi}_i(M)) = \varphi_i(\xi(M))$. Suppose first that $M= y_{a_1}\cdots y_{a_k} = y_1^{u_1^{(1)}} \cdots y_n^{u_n^{(1)}}$, so that $u_i^{(1)}= \sum_{a_j=i} 1$. Then $\varphi_1(\xi(y_1^{u_1^{(1)}} \cdots y_n^{u_n^{(1)}})) = \varphi_1(\{u_i^{(1)}\}) = \varphi(\{u_i^{(1)}\}) = \{v_i\}$. Here we have $v_j=u_i^{(1)}$ whenever $j=\varphi(i)$ for some $i$, and $v_j=0$ otherwise. On the other hand
\[
\xi(\widetilde{\varphi}_1(M)) = \xi(\widetilde{\varphi}_1(y_1^{u_1^{(1)}} \cdots y_n^{u_n^{(1)}})) = \xi(y_{\varphi(1)}^{u_1^{(1)}} \cdots y_{\varphi(n)}^{u_n^{(1)}}) = \{v_i\},
\]
where $v_j=u_i^{(1)}$ in case $j=\varphi(i)$ for some $i$, and $v_j=0$ otherwise. Thus for $M\in B_1$ the statement holds. 

Suppose $M=y_{a_1}y_{a_2}\cdots y_{a_k}z_{c_1}z_{d_1}\cdots z_{c_m}\widehat{z_{d_m}} = y_1^{u_1^{(1)}} \cdots y_n^{u_n^{(1)}} z_{c_1}z_{d_1}\cdots z_{c_m}\widehat{z_{d_m}}$ lies in $B_2$. Then
\[
\varphi_1(\xi(M)) = 
  	\varphi_1(\{u_i^{(1)},u_i^{(2)},u_i^{(3)}\}) = 
  	(\varphi(\{u_i^{(1)}\}),\{(u_i^{(2)},u_i^{(3)})\}) = 
  	\{(v_i,u_i^{(2)},u_i^{(3)})\},
\]
where $v_j=u_i^{(1)}$ if $j=\varphi(i)$ for some $i$, and $v_j=0$ otherwise,  $u_i^{(2)}=\sum_{c_j=i}1$, and $u_i^{(3)}=\sum_{d_j=i}1$. On the other hand 
\[
\xi(\widetilde{\varphi}_1(M)) =  
  	\xi(\widetilde{\varphi}_1(y_1^{u_1^{(1)}}\cdots y_n^{u_n^{(1)}}z_{c_1}z_{d_1}\cdots z_{c_m}\widehat{z_{d_m}} )) = \{(v_i,u_i^{(2)},u_i^{(3)})\},
\]
and we are done with $\varphi_1$. 

Now we consider $\varphi_2$. Let $M = y_1^{u_1^{(1)}} \cdots y_n^{u_n^{(1)}}\in B_1$, then
\[
\xi(\widetilde{\varphi}_2(M)) = \xi(\widetilde{\varphi}_2(y_1^{u_1^{(1)}}\cdots y_n^{u_n^{(1)}} ))= 
  	\xi(y_1^{u_1^{(1)}}\cdots y_n^{u_n^{(1)}} )= \{u_i^{(1)}\}.
\]
We also have
\[
\varphi_2(\xi(M)) = \varphi_2(\xi(y_1^{u_1^{(1)}}\cdots y_n^{u_n^{(1)}} )) = 
  	\varphi_2(\{u_i^{(1)}\}) = \{u_i^{(1)}\},
\]
and this case is done. Let $M =y_1^{u_1^{(1)}}\cdots y_n^{u_n^{(1)}}z_{c_1}z_{d_1}\cdots z_{c_m}\widehat{z_{d_m}} \in B_2$, then
\begin{align*}
\xi(\widetilde{\varphi}_2(M)) &= 
\xi(\widetilde{\varphi}_2(y_1^{u_1^{(1)}}\cdots y_n^{u_n^{(1)}}z_{c_1}z_{d_1}\cdots z_{c_m}\widehat{z_{d_m}} )) \\
&= 
\xi(y_1^{u_1^{(1)}}\cdots y_n^{u_n^{(1)}}z_{\phi(c_1)}z_{\phi(d_1)}\cdots z_{\varphi(c_m)}\widehat{z_{\varphi(d_m)}} )) = 
\{u_i^{(1)},u_i^{(2)}, u_i^{(3)}\},
\end{align*}
where the $\{u_i^{(k)}\}$ are defined as above. To conclude, we compute
\[
\varphi_2(\xi(M)) = \varphi_2(\{u_i^{(1)},v_i^{(2)},v_i^{(3)}\}) = 
\{u_i^{(1)},\varphi(\{v_i^{(2)},v_i^{(3)}\})\} = 
\{u_i^{(1)},w_i^{(2)},w_i^{(3)}\},
\]
with $v_i^{(2)} = \sum_{c_r=i} 1$, $v_i^{(3)} = \sum_{d_r=i} 1$, and $w_j^{(l)} = v_i^{(l)}$ if $j=\varphi(i)$ for some $i$, and 0 otherwise.

If $j=\varphi(i)$ then $w_j^{(2)}=v_{\varphi^{-1}(j)}^{(2)}=\sum_{c_r=\phi^{-1}(j)}1 = \sum_{\varphi(c_r)=j}1 =u_j^{(2)}$. Moreover  $w_j^{(2)}=u_j^{(2)}=0$ when $j\notin \varphi(\mathbb{N})$. In the same manner we deal with $w_j^{(3)}$. Thus $\varphi_1(\xi(x))=\xi(\widetilde{\varphi}_1(x))$ and $\varphi_2(\xi(x))=\xi(\widetilde{\varphi}_2(x))$, and the lemma is proved.
\end{proof}

The orders defined above are not quite well behaved when multiplying by some monomial. Nevertheless there are several situations where they do behave, and these are just the ones we need.

\begin{lemma}
\label{mult}
Let $M=y_1^{r_1}\cdots y_n^{r_n}$, $N$, and $\widetilde{N}\in B$ be such that $N\leq \widetilde{N}$. Then $MN\leq M\widetilde{N}$.
\end{lemma}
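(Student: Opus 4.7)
The plan is to transport the inequality $N \le \widetilde{N}$ through the bijection $\xi\colon B\to V$, determine the effect of left multiplication by $M$ on $\xi$-values, and then do a case analysis according to whether $N$ and $\widetilde{N}$ belong to $B_1$ or to $B_2$. Since $M=y_1^{r_1}\cdots y_n^{r_n}$ contains no $z$-variables, forming $MN$ and reducing it to a canonical basis element requires only the identity $[y_1,y_2]$ to merge and sort the $y$-factors; the $z$-block of $N$ is untouched. Consequently $MN\in B_i$ whenever $N\in B_i$, and writing $\{r_i\}$ for the exponent sequence of $M$ (with $r_i=0$ for $i>n$), one gets $\xi(MN)=\{r_i+u_i^{(1)}\}$ when $\xi(N)=\{u_i^{(1)}\}\in V_1$, and $\xi(MN)=(\{r_i+u_i^{(1)}\},\{u_i^{(2)}\},\{u_i^{(3)}\})$ when $\xi(N)=(\{u_i^{(1)}\},\{u_i^{(2)}\},\{u_i^{(3)}\})\in V_2$.

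With this in hand the case analysis is straightforward. When $N,\widetilde{N}\in B_1$, the claim reduces to saying that the order on $V_1$ is preserved under componentwise addition of the fixed sequence $\{r_i\}$; this is immediate since $u_k<v_k\Leftrightarrow r_k+u_k<r_k+v_k$ and $u_j=v_j\Leftrightarrow r_j+u_j=r_j+v_j$. When $N\in B_1$ and $\widetilde{N}\in B_2$, the convention $V_1<V_2$ built into the definition of $\le$ on $V$ gives $MN\le M\widetilde{N}$ for free, since $MN\in B_1$ and $M\widetilde{N}\in B_2$. The case $N\in B_2$, $\widetilde{N}\in B_1$ cannot occur, as it would contradict $N\le\widetilde{N}$.

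The remaining case $N,\widetilde{N}\in B_2$ is where one must unwind the nested definition of $\le$ on $V_2\subseteq V(Q_3)$: either the pair $(\{u_i^{(2)}\},\{u_i^{(3)}\})$ is strictly smaller than $(\{\widetilde{u}_i^{(2)}\},\{\widetilde{u}_i^{(3)}\})$ in $V(Q_2)$, in which case the same inequality between $\xi(MN)$ and $\xi(M\widetilde{N})$ holds because those components are fixed by left multiplication by $M$; or the two pairs coincide and $\{u_i^{(1)}\}\le\{\widetilde{u}_i^{(1)}\}$ in $V_1$, in which case the $V_1$-argument from the previous paragraph, applied to $\{r_i+u_i^{(1)}\}$ and $\{r_i+\widetilde{u}_i^{(1)}\}$, gives $MN\le M\widetilde{N}$.

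I do not anticipate a significant obstacle. The only delicate point is keeping track of the nested definition of $\le$ on $V$ (via $V(Q_2)$ and $V(Q_3)$) and verifying that left multiplication by $M$ changes only the first component of the $\xi$-triple, by adding the fixed sequence $\{r_i\}$, while leaving the two $z$-components --- which carry the dominant part of the order --- unchanged.
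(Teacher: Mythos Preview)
Your proposal is correct and follows essentially the same route as the paper's proof: both argue by cases on whether $N,\widetilde{N}$ lie in $B_1$ or $B_2$, observe that left multiplication by $M\in B_1$ adds the fixed sequence $\{r_i\}$ to the first component of the $\xi$-value while leaving the $z$-components untouched, and then reduce the $B_2$ case (when the $z$-blocks coincide) to the $B_1$ case. Your write-up is in fact a bit more explicit than the paper's about the effect of the multiplication on $\xi$, but the underlying argument is the same.
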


\begin{proof}
If $N\in B_1$ and $\widetilde{N}\in B_2$ then $MN\in B_1$, $M\widetilde{N}\in B_2$, so we have the conclusion. The same holds if $N=\widetilde{N}$. Let $N=y_1^{u_1}\cdots y_k^{u_k}$ and $\widetilde{N}=y_1^{v_1}\cdots y_k^{v_k}$ where $u_kv_k\neq 0$. Then, for some $t_0\in \mathbb{N}$, we have $u_{t_0}<v_{t_0}$ and $u_t=v_t$ for every $t>t_0$. Denote $m=\max\{k,n\}$, then
\[
  \xi(MN)=\xi(y_1^{u_1+r_1}\cdots y_m^{u_m+r_m})=\{u_i+r_i\},
\]
\[
 \xi(M\widetilde{N})=\xi(y_1^{v_1+r_1}\cdots y_m^{v_m+r_m})=\{v_i+r_i\},
\]
where $u_{t_0}+r_{t_0}<v_{t_0}+r_{t_0}$ and $u_{t}+r_{t}=v_{t}+r_{t}$, for every $t>t_0$. It follows  $\xi(MN)<\xi(M\widetilde{N})$ if $N$, $\widetilde{N}\in B_1$. 

Now we consider $N$, $\widetilde{N}\in B_2$. Let $N=y_{a_1}\cdots y_{a_n}z_{c_1}z_{d_1}\cdots z_{c_k}\widehat{z_{d_k}}$ and $\widetilde{N}=y_{a'_1}\cdots y_{a'_m}z_{c'_1}z_{d'_1}\cdots z_{c'_s}\widehat{z_{d'_s}}$. Note that $z_{c_1}z_{d_1}\cdots z_{c_k}\widehat{z_{d_k}}\leq z_{c'_1}z_{d'_1}\cdots z_{c'_s}\widehat{z_{d'_s}}$. Then for every $M$, $M'\in B_1$, one has $Mz_{c_1}z_{d_1}\cdots z_{c_k}\widehat{z_{d_k}}\leq M'z_{c'_1}z_{d'_1}\cdots z_{c'_s}\widehat{z_{d'_s}}$. Therefore we suppose that
\[
z_{c_1}z_{d_1}\cdots z_{c_k}\widehat{z_{d_k}}= z_{c'_1}z_{d'_1}\cdots z_{c'_s}\widehat{z_{d'_s}}, \quad 
 y_{a_1}\cdots y_{a_n} \leq y_{a'_1}\cdots y_{a'_m}.
\]
But as seen above, $My_{a_1}\cdots y_{a_n} \leq M y_{a'_1}\cdots y_{a'_m}$, hence $MN\leq M\widetilde{N}$.
\end{proof}

\begin{lemma}
\label{mult1}
Let $M=y_1^{u_1}\cdots y_n^{u_n}$ and $N=y_1^{r_1}\cdots y_k^{r_k}$, then $M\leq MN$.
\end{lemma}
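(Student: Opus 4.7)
The statement compares two monomials of $B_1$, namely $M$ and $MN$, so the relevant order is the one transferred from $V_1 = V(Q_1)$ via $\xi$. The plan is to compute the two sequences $\xi(M)$ and $\xi(MN)$ and then check the defining inequality for $\le$ on $V(Q_1)$ directly.

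First I would dispose of the trivial case. If $N$ is the empty monomial (i.e.\ $k=0$, or equivalently $r_1=\cdots=r_k=0$), then $MN = M$, so $\xi(M)=\xi(MN)$ and $M \le MN$ by equality. From here on assume $N\neq 1$, and let $t^{*}=\max\{\,t : r_t>0\,\}$, which exists by the nontriviality of $N$.

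Next I would write out the sequences. Letting $m=\max\{n,k\}$ and padding with zeros where needed (exactly as in the proof of Lemma~\ref{mult}), we have
\[
\xi(M)=\{u_1,u_2,\ldots,u_m\}, \qquad \xi(MN)=\{u_1+r_1,u_2+r_2,\ldots,u_m+r_m\}.
\]
At index $t^{*}$ we get $u_{t^{*}}+r_{t^{*}}>u_{t^{*}}$ since $r_{t^{*}}>0$, and for every $t>t^{*}$ the maximality of $t^{*}$ forces $r_t=0$, whence $u_t+r_t=u_t$. These are precisely the conditions appearing in the definition of the order on $V(Q_1)$, so $\xi(M)<\xi(MN)$, and hence $M\le MN$.

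This argument should be immediate once the definitions are unwound; no real obstacle is expected since both $M$ and $MN$ live in $B_1$, and the order on $V_1$ reduces to a tail-comparison that is insensitive to adding zeros to the right. The only thing to be a bit careful about is the zero-padding convention, which is already fixed in the paragraph defining the order on $V(Q_1)$ and has just been used in Lemma~\ref{mult}.
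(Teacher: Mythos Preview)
Your proof is correct. The paper, however, gives a one-line argument: since $1,N,M\in B_1$ and $1\le N$, Lemma~\ref{mult} (applied with $N$ and $\widetilde N$ there taken to be $1$ and the present $N$) immediately gives $M\cdot 1\le M\cdot N$. Your computation is precisely what underlies both the inequality $1\le N$ and the $B_1$ case of Lemma~\ref{mult}, so you are effectively reproving that special case by hand rather than citing it; this buys self-containment at the cost of a few extra lines, while the paper's approach exploits the already-established monotonicity of left multiplication by elements of $B_1$.
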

 \begin{proof}
Since 1, $N$, $M\in B_1$, Lemma~\ref{mult} yields $M \cdot 1\leq MN$. 
\end{proof}
We should note that, after a reordering of variables, each monomial $M \in D\langle X\rangle$ can be written in the form $M=\pm \delta(M)$ (modulo $I$), where $\delta(M)\in B$ .
\begin{lemma}
\label{mult2}
Let $M=z_{a_1}z_{a_2}\cdots z_{a_k}$ and $N=z_{t_1}\cdots z_{t_k}$. Then $M\leq MN$.
\end{lemma}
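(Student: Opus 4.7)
The plan is to compare $M$ and $MN$ directly via the bijection $\xi$, exploiting the fact that the outermost discriminating criterion in the order $\le$ is the total-degree function $\partial$. Since $M=z_{a_1}\cdots z_{a_k}$ contains only $z$-letters (and at least one of them), its canonical representative $\delta(M)\in B$ lies in $B_2$; multiplying by further $z$-letters keeps us in $B_2$, so $\delta(MN)\in B_2$ as well. Hence both $\xi(\delta(M))$ and $\xi(\delta(MN))$ live in $V_2$, and the comparison is governed by the order inherited from $V(Q_3)$.

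Next I would compute the relevant $\partial$-values. Write $\xi(\delta(M))=(\emptyset,\{u_i^{(2)}\},\{u_i^{(3)}\})$, where $u_i^{(2)}$ counts occurrences of $z_i$ at the odd positions of $\delta(M)$ and $u_i^{(3)}$ those at the even positions; the reduction of $M$ to $\delta(M)$ uses only the identity $z_1z_2z_3=z_3z_2z_1$, which preserves the total number of $z$-letters, so $\sum_i u_i^{(2)}+\sum_i u_i^{(3)}=k$. Analogously, writing $\xi(\delta(MN))=(\emptyset,\{v_i^{(2)}\},\{v_i^{(3)}\})$ one gets $\sum_i v_i^{(2)}+\sum_i v_i^{(3)}=k+\ell$, where $\ell\ge 0$ is the length of $N$. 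If $\ell=0$ there is nothing to prove; otherwise $k+\ell>k$.

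Now I would invoke the definition of the order on $V(Q_3)$: it compares the pair of second and third components first, using the order on $V(Q_2)$; and the order on $V(Q_2)$ compares $\partial$-values first. Since the $\partial$-value of the $(2,3)$-components of $\xi(\delta(M))$ equals $k$, strictly less than $k+\ell$ for $\xi(\delta(MN))$, we obtain $\xi(\delta(M))<\xi(\delta(MN))$ in $V(Q_3)$, and hence $M<MN$ in the order transferred to $B$ by $\xi$.

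There is no substantial obstacle: the lemma follows mechanically once one notices that the total number of $z$-letters is the primary sorting key for elements of $B_2$, and that the reduction to canonical form cannot change this count. The only bookkeeping to be careful about is the possible sign in $MN=\pm\delta(MN)$, but the order is defined on the monomials in $B$ (not on signed elements), so the sign is irrelevant for the comparison.
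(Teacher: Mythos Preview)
Your argument is correct and follows essentially the same route as the paper's own proof: both observe that the primary comparison key for elements of $B_2$ under $\le$ is the total number of $z$-letters (the $\partial$-value of the last two components), and that passing to canonical form does not change this count, so $\partial(\xi(M))<\partial(\xi(MN))$ unless $N=1$. The only minor point is that the paper notes the reduction to canonical form here uses only $z_1z_2z_3=z_3z_2z_1$ and hence introduces no sign at all, so your caveat about a possible $\pm$ is unnecessary (though harmless).
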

 \begin{proof}
Modulo the identities  from Lemma~\ref{basicids} we can transform $M$ and $N$ into their canonical form in $D\langle X\rangle/I$, without even changing the sign. Hence we can suppose $M\in B$ and $MN\in B$. We apply $\xi\colon   \partial  (\xi(MN))=\partial(\xi(M))+\partial(N)\geq \partial(\xi(M))$. One attains equality only when $N=1$, thus $M\le MN$.
 \end{proof}

\begin{lemma}
\label{mult3}
Let $M=y_1^{r_1}\cdots y_n^{r_n}$, $N=z_{t_1}\cdots z_{t_k}$, and $P$, $\widetilde{P}\in B$. If $P\leq \widetilde{P}$, then:
 \begin{itemize}
  \item [a)] $\delta(PM)\leq \delta(\widetilde{P}M)$;
  \item [b)] $PN\leq \widetilde{P}N$.
 \end{itemize}
\end{lemma}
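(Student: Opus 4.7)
The plan is to perform a case analysis on whether each of $P,\widetilde P$ lies in $B_1$ or $B_2$, write $\delta(PM)$ and $\delta(PN)$ explicitly in canonical form in each case, and read off the desired inequalities from the images under $\xi$.

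For part (a), the key observation is that right multiplication by a pure $y$-monomial $M=y_1^{r_1}\cdots y_n^{r_n}$ only affects the $y$-part of a canonical monomial. Using $yz+zy$ from Lemma~\ref{basicids} to move the $y$-factors of $M$ past the $z$-block of $P$ (up to a sign), and then $[y_1,y_2]$ to merge and sort with the $y$-block of $P$, one obtains $\xi(\delta(PM))=(\alpha_P+r,\beta_P,\gamma_P)$ whenever $\xi(P)=(\alpha_P,\beta_P,\gamma_P)$, with $r=(r_1,\ldots,r_n,0,\ldots)$; the analogous formula holds for $\widetilde P$. The case $P\in B_1$, $\widetilde P\in B_2$ is immediate from the decree $V_1<V_2$. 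If both $P,\widetilde P\in B_1$, commutativity of the $y$-variables gives $PM=MP$ and $\widetilde PM=M\widetilde P$, and Lemma~\ref{mult} applies directly. If both $P,\widetilde P\in B_2$, either the last two coordinates of $\xi(P),\xi(\widetilde P)$ already witness $P<\widetilde P$, in which case this witness is preserved verbatim under the shift in the first coordinate, or they agree and the problem reduces to the translation-invariance of $\leq$ on $V_1$: $\alpha\leq\alpha'$ implies $\alpha+r\leq\alpha'+r$, which is immediate from the rightmost-differing-coordinate definition.

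For part (b) I run a parallel analysis. The structural point is that, modulo $z_1z_2z_3-z_3z_2z_1$, the canonical form of $Z_PN$ sorts separately the multiset of indices at odd positions and the multiset at even positions of the product $Z_PN$; the allocation of the indices $t_j$ of $N$ to odd- versus even-position slots is dictated solely by the parity of $|Z_P|$. Consequently, when $P,\widetilde P\in B_2$ satisfy $|Z_P|=|Z_{\widetilde P}|$---which is forced the moment the totals $\partial(\beta_P,\gamma_P)$ and $\partial(\beta_{\widetilde P},\gamma_{\widetilde P})$ agree---the two parities coincide and $N$ contributes the same multisets to the new $\beta$- and $\gamma$-coordinates of $\delta(PN)$ and $\delta(\widetilde PN)$. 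Translation-invariance on $V_1$ then transports any $V_1$-comparison of $\gamma$'s or of $\beta$'s from $P,\widetilde P$ to $\delta(PN),\delta(\widetilde PN)$; and if the last two coordinates of $\xi(P),\xi(\widetilde P)$ are identical, the $\alpha$-coordinates, unaffected by right multiplication by $N$, settle the matter. The remaining cases---$P\in B_1$ with $\widetilde P\in B_2$, or $|Z_P|\ne|Z_{\widetilde P}|$---follow from the fact that the total $\partial$ over the last two coordinates is then strictly larger on the $\widetilde P$ side, so $\delta(PN)<\delta(\widetilde PN)$.

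The main obstacle I anticipate is the bookkeeping in the $B_2$-$B_2$ subcase of part (b): one must check carefully that, when $|Z_P|=|Z_{\widetilde P}|$, the $k$ indices of $N$ genuinely contribute the same fixed multisets to the odd- and even-position slots on both sides, so that the new last-two-coordinates are obtained from the old ones by adding the same $V(Q_1)$-element on each side. Once this is in place, both parts reduce to translation-invariance of the right-lexicographic order on $V_1$, which is routine from the definition.
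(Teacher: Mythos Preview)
Your approach is correct and tracks the paper's argument closely. Both proofs proceed by the same case analysis on $B_1$ versus $B_2$, reduce part~(a) to Lemma~\ref{mult} via commutativity of the $y$-block, and in part~(b) exploit that when the $z$-parts of $P$ and $\widetilde P$ have equal length (hence equal parity) the indices of $N$ land in the same odd/even slots on both sides, so the comparison reduces to translation-invariance of the right-lexicographic order on $V(Q_1)$. The paper carries this out by first stripping off the $y$-parts to reduce to pure $z$-monomials and then inducting on $k$ with $N=z_{t_1}$, whereas you handle all of $N$ at once via the $\xi$-coordinates; these are equivalent.

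One small bookkeeping omission in part~(b): your enumeration of cases covers $P,\widetilde P\in B_2$ with $|Z_P|=|Z_{\widetilde P}|$, the mixed case $P\in B_1,\ \widetilde P\in B_2$, and the case $|Z_P|\ne|Z_{\widetilde P}|$, but not $P,\widetilde P\in B_1$. This is harmless---when both lie in $B_1$ the $z$-parts of $PN$ and $\widetilde PN$ are identical (both equal to the canonical form of $N$), so the comparison falls through to the unchanged $\alpha$-coordinates, exactly as in your ``last two coordinates identical'' sub-subcase---but you should state it.
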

\begin{proof}
a) If $P$, $\widetilde{P}\in B_1$, then $PM=MP$ and $\widetilde{P}M=M\widetilde{P}$. Then by Lemma~\ref{mult} we get $\delta(PM)\leq \delta(\widetilde{P}M)$. Now assume that $P = M'z_{c_1}z_{d_1}\cdots z_{c_m}\widehat{z_{d_m}}$ and $\widetilde{P} = M''z_{c'_1}z_{d'_1}\cdots z_{c'_l}\widehat{z_{d'_l}}\in B_2$, where $M'=y_{a_1}\cdots y_{a_{m_1}}$, $M''=y_{a'_1}\cdots y_{a'_{l_1}}$. Note that $\delta(PM)\leq \delta(\widetilde{P}M)$ in $D\langle X\rangle/I$, if $z_{c_1}z_{d_1}\cdots z_{c_m}\widehat{z_{d'_m}}< z_{c'_1}z_{d'_1}\cdots z_{c'_l}\widehat{z_{d'_l}}$. Therefore we assume that  $N=z_{c_1}z_{d_1}\cdots z_{c_m}\widehat{z_{d_m}} = z_{c'_1}z_{d'_1}\cdots z_{c'_l}\widehat{z_{d'_l}}$.  Then $M'\leq M''$ since $P\leq \widetilde{P}$. On the other hand, modulo $I$, we write $\delta(PM) = M'Mz_{c_1}z_{d_1}\cdots z_{c_m}\widehat{z_{d_m}}$ and $\delta(\widetilde{P}M)= M''Mz_{c_1}z_{d_1}\cdots z_{c_l}\widehat{z_{d_l}}$. It follows from Lemma~\ref{mult} that $M'M<M''M$. Thus we obtain $\delta(PM)\leq \delta(\widetilde{P}M)$.

Finally if $P\in B_1$ and $\widetilde{P}\in B_2$, the definition of the order $\leq$ yields that modulo $I$ one has $\delta(PM)\leq \delta(\widetilde{P}M)$, and this gives us the first statement.

b) Let us note that the product $PM$ is obtained by the juxtaposition of $P$ and $N$, and then reordering the variables $z_{t_i}$ (and only these variables), while keeping the same sign. If $P$, $\widetilde{P}\in B_1$, then  $PN\leq\widetilde{P}N$. Let $P=MN'$ and $\widetilde{P}=M'N''$, with $N'=z_{c_1}z_{d_1}\cdots z_{c_k}\widehat{z_{d_k}}$, $N''=z_{c'_1}z_{d'_1}\cdots z_{c'_k}\widehat{z_{d'_k}}$, and $M$, $M'$ monomials in the $y_i$'s. Therefore $PN=MN'N$ and $\widetilde{P}N=M'N''N$. If $N'=N''$ we obtain $M\leq M'$ hence $PN\leq \widetilde{P}N$. Suppose we have a strict inequality, then $N'\leq N''$. So it remains to show in this case that $N'N\leq N''N$, because $PN=MN'N \leq M'N''N=\widetilde{P}N$. Thus we can suppose $P=z_{c_1}z_{d_1}\cdots z_{c_k}\widehat{z_{d_k}}$ and $\widetilde{P}=z_{c'_1}z_{d'_1}\cdots z_{c'_{k'}}\widehat{z_{d'_{k'}}}$. But $P\leq \widetilde{P}$, that is, $\xi(P)\leq \xi(\widetilde{P})$. We obtain $\{(r_i,s_i)\}\leq \{(r_i',s_i')\}$, where $r_i=\sum_{c_j=i}1$, $s_i=\sum_{d_j=i}1$, $r_i'=\sum_{c_j'=i}1$, and $s_i'=\sum_{d_j'=i}1$. Now we are left with the following cases:
 \begin{itemize}
  \item [(a)] $\partial(\{(r_i,s_i)\})< \partial(\{(r_i',s_i)'\})$; 
  \item [(b)] $\partial(\{(r_i,s_i)\}) = \partial(\{(r_i',s_i)'\})$ and $\{s_i\}<\{s_i'\}$, or $\{s_i\} = \{s_i'\}$ and $\{r_i\}<\{r_i'\}$.
 \end{itemize}
In the first case, $PN\leq \widetilde{P}N$, and we can assume the equality $k=k'$ holds. In the second case, we have to consider either $P=z_{c_1}z_{d_1}\cdots z_{c_k}\widehat{z_{d_k}}$ and $\widetilde{P}=z_{c'_1}z_{d'_1}\cdots z_{c'_k}\widehat{z_{d'_k}}$ both of even degree, or both of odd degree. Let us suppose that $P$ and $\widetilde{P}$ are of even degree. If we show that the statement holds for $N=z_{t_1}$, then the general case follows by an obvious induction.

We have $\{s_i\}<\{s_i'\}$, or $\{s_i\} = \{s_i'\}$ and $\{r_i\}<\{r_i'\}$, where $r_i=\sum_{c_j=i}1$, $s_i=\sum_{d_j=i}1$, $r_i'=\sum_{c_j'=i}1$, and $s_i'=\sum_{d_j'=i}1$. Then $\xi(PN)=\{(u_i,v_i)\}$ with $u_i=r_i$ in case $i\neq t_1$, $u_{t_1}=r_{t_1}+1$ and $v_i=s_i$, $i\geq 1$. In the same manner for $\widetilde{P}$ we obtain $\xi(\widetilde{P}N)=\{(u'_i,v'_i)\}$, where $u'_{t_1}=r'_{t_1}+1$, $u'_i=r'_i$ if $i\neq t_1$ and $v'_i=s'_i$, $i\geq 1$. If $\{s_i\}<\{s_i'\}$, then $\{v_i\}<\{v'_i\}$; if $\{s_i\} = \{s_i'\}$ and $\{r_i\}<\{r_i'\}$, then $\{v_i\}=\{v'_i\}$ and $\{u_i\}<\{u'_i\}$. Therefore $\{(u_i,v_i)\}\leq \{(u'_i,v'_i)\}$, that is $\xi(PN)\leq \xi(\widetilde{P}N)$. Thus  $PN\leq\widetilde{P}N$. The case when $P$ and $\widetilde{P}$ are of odd degree is analogous.
\end{proof}

In the following lemma we prove that $\widetilde{\varphi}_1$, $\widetilde{\varphi}_2$ and $\widetilde{\varphi}$ preserve $\leq$. Recall that these functions were defined in the proof of Lemma~\ref{Vispwos}.

\begin{lemma}
\label{mult4}
 Let $\widetilde{\varphi}\in \widetilde{\Phi}$, $\widetilde{\varphi}_i\in \widetilde{\Phi}_i$, $i=1$, $2$. If $M<\widetilde{M}$, then:
 \begin{itemize}
  \item [a)] $\widetilde{\varphi}_i(M)<\widetilde{\varphi}_i(\widetilde{M})$;
  \item [b)] $\widetilde{\varphi}(M)<\widetilde{\varphi}(\widetilde{M})$.
 \end{itemize}
\end{lemma}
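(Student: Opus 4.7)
The plan is to transport the claim along the bijection $\xi\colon B \to V$ of Eq.~(\ref{xi}) using Lemma~\ref{comp}. Since $\xi(\widetilde{\varphi}_i(M)) = \varphi_i(\xi(M))$ and $\xi$ by construction carries the order $\le$ on $B$ to the order $\le$ on $V$, part (a) reduces to the assertion that each induced map $\varphi_i\colon V(Q_n) \to V(Q_n)$ strictly preserves $\le$. Part (b) follows immediately from (a) since $\widetilde{\varphi} = \widetilde{\varphi}_1 \circ \widetilde{\varphi}_2$, and composing two strict order-preserving maps is strict order-preserving.

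I would split the argument according to where $M$ and $\widetilde{M}$ land in the partition $B = B_1 \sqcup B_2$. The cross case, where $M \in B_1$ and $\widetilde{M} \in B_2$ (necessarily in this direction since $M < \widetilde{M}$), is immediate: each $\widetilde{\varphi}_i$ only relabels indices within a variable type, so it preserves the partition, and the definition $V_1 < V_2$ directly yields $\widetilde{\varphi}_i(M) < \widetilde{\varphi}_i(\widetilde{M})$. The real content is in the two remaining cases.

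For $M, \widetilde{M} \in B_1$, the map $\varphi_2$ is the identity on $V(Q_1)$ and nothing needs to be checked; for $\varphi_1 = \varphi$ I would verify directly that if $\{u_i\} < \{v_i\}$ is witnessed by some $k$ with $u_k < v_k$ and $u_j = v_j$ for every $j > k$, then the relocated sequences $\varphi(\{u_i\})$ and $\varphi(\{v_i\})$ first differ at position $\varphi(k)$, while strict monotonicity of $\varphi$ forces agreement at all positions $j > \varphi(k)$ lying in the image and both sequences vanish outside the image. For $M, \widetilde{M} \in B_2$ the order on $V(Q_3)$ is defined recursively from the orders on $V(Q_2)$ and $V(Q_1)$, so I would treat the two maps separately: $\varphi_1$ only alters the first component and reduces directly to the previous sub-case, while $\varphi_2$ only alters the pair formed by the last two components and reduces to checking that $\varphi$ preserves $\le$ on $V(Q_2)$. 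That step rests on two observations: $\partial$ is invariant under $\varphi$ since values are not altered, only relocated, and each component projection on $V(Q_2)$ commutes with the induced $V(Q_1)$-action, so the $V(Q_1)$-case already handled applies. A short chase through the trichotomy defining $\le$ on $V(Q_2)$ then yields the claim.

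The main obstacle is essentially bookkeeping: the order $\le$ on $V(Q_n)$ is defined with several nested subcases (strict inequality at the top level versus equality at the top and strictness further down), and one must verify that each subcase is preserved both when $\varphi_i$ touches the relevant component and when it leaves that component untouched. Once one notices that $\varphi$ permutes entries without changing their values, so $\partial$ is preserved and the action is componentwise compatible with the $V(Q_1)$-action, the case analysis is essentially mechanical.
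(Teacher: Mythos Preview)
Your proposal is correct and follows essentially the same route as the paper: reduce (b) to (a) via the factorization $\widetilde{\varphi}=\widetilde{\varphi}_1\circ\widetilde{\varphi}_2$, split into the cases $B_1$ and $B_2$, handle the $V(Q_1)$ piece by tracking where the critical index $k$ lands under the monotone injection $\varphi$, and handle the $V(Q_2)$ piece by noting that $\partial$ is invariant and that the componentwise action reduces to the $V(Q_1)$ case. Your framing through Lemma~\ref{comp}, transporting the question to $V$ and arguing directly that each $\varphi_i$ preserves $\le$ there, is a bit cleaner than the paper's proof, which recomputes $\xi(\widetilde{\varphi}_i(M))$ from scratch each time rather than invoking the commutation $\xi\circ\widetilde{\varphi}_i=\varphi_i\circ\xi$; but the underlying argument is identical.
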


\begin{proof}
We have $\widetilde{\varphi}=\widetilde{\varphi}_1\circ \widetilde{\varphi}_2$, hence it suffices to prove (a). Let first $M$, $\widetilde{M}\in B_1$. If $M=1$ or $\widetilde{M}=1$, the statement is obvious. Hence we suppose $M=y_1^{r_1}\cdots y_n^{r_n}$ and $\widetilde{M}=y_1^{s_1}\cdots y_n^{s_n}$, $s_n\neq 0$ or $r_n\neq 0$. But $\xi(\widetilde{\varphi}_1(M))=\xi(y_{\varphi(1)}^{r_1}\cdots y_{\varphi(n)}^{r_n})=\{r'_i\}$, with $r'_i=r_j$ if $i=\phi(j)$, and $r'_i=0$ if $i\notin \varphi(\mathbb{N})$. In the same way,  $\xi (\widetilde{\varphi}_1(\widetilde{M}))=\xi(y_{\phi(1)}^{s_1}\cdots y_{\phi(n)}^{s_n})=\{s'_i\}$, where $s'_i=s_j$ if $i=\phi(j)$, and $s'_i=0$ if $i\notin \varphi(\mathbb{N})$.

But $M<\widetilde{M}$, thus there exists $t_0\in \mathbb{N}$ with $r_{t_0}<s_{t_0}$ and $s_t=r_t$ when $t>t_0$. It is enough to prove that $r'_{\varphi(t_0)}<s'_{\varphi(t_0)}$, and $s'_t=r'_t$ if $t>\phi(t_0)$. Since $\varphi({t_0})\in \varphi({\mathbb{N}})$ we obtain $r'_{\varphi(t_0)}=r_{t_0}<s_{t_0}=s'_{\varphi(t_0)}$. Choose $t>\varphi(t_0)$. If $t\in \varphi(\mathbb{N})$, since $\varphi$ is increasing, there is some $p>t_0$ such that $t=\varphi(p)$. Therefore $r'_t=r_{p}=s_{p}=s'_t$. Also, if $t\notin \varphi(\mathbb{N})$ then $r'_t=s'_t=0$. Hence $\widetilde{\varphi_1}(M)<\widetilde{\varphi_1}(\widetilde{M})$. 

Let $M=NP$ e $\widetilde{M}=N'P'\in B$, where $N$, $N'\in B_1$, and $P$, $P'$ are monomials in the variables $z_{t}$. We know that $\widetilde{\varphi_1}$ fixes the $z_t$, thus $\widetilde{\varphi_1}(M)=\widetilde{\varphi_1}(N)P$, and $\widetilde{\varphi_1}(\widetilde{M})=\widetilde{\varphi_1}(N')P'$. If $P<P'$ we are done, so suppose $P=P'$. But $M<\widetilde{M}$, then $N<N'$ and also $\widetilde{\varphi_1}(N)<\widetilde{\varphi_1}(N')$ by the above argument. Therefore $\widetilde{\varphi_1}(M)=\widetilde{\varphi_1}(N)P< \widetilde{\varphi_1}(N')P'=\widetilde{\varphi_1}(\widetilde{M})$.

Now we consider the function $\widetilde{\varphi_2}$. Let first $M=z_{c_1}z_{d_1}\cdots z_{c_m}\widehat{z_{d_m}}$ and $\widetilde{M}=z_{c'_1}z_{d'_1}\cdots z_{c'_{l}}\widehat{z_{d'_{l}}}$. Then we have $\xi(M)=\{(0,r_i,s_i)\}$ and $\xi(\widetilde{M})=\{(0,r'_i,s'_i)\}$, where $r_i=\sum_{c_j=i}1$, $s_i=\sum_{d_j=i}1$, $r'_i=\sum_{c'_j=1}1$, $s'_i=\sum_{d'_j=1}1$. Furthermore $\xi(\widetilde{\varphi_2}(M))=\{(0,u_i,v_i)\}$ and $\xi(\widetilde{\varphi_2}(\widetilde{M}))=\{(0,u'_i,v'_i)\}$, with $(u_i,v_i)=(r_j,s_j)$, and $(u'_i,v'_i)=(r'_j,s'_j)$ if $i=\varphi(j)$, and $(u_i,v_i)=(u'_i,v'_i)=(0,0)$ if $i\notin\varphi(\mathbb{N})$. Since $M$, $\widetilde{M}\in B_2$, and by assumption  $M<\widetilde{M}$, it follows there are two possibilities only:
 \begin{itemize}
  \item [(a)] $\partial(\{(r_j,s_j)\})<\partial(\{(r'_j,s'_j)\})$; 
  \item [(b)] $\partial(\{(r_j,s_j)\})=\partial(\{(r'_j,s'_j)\})$, and either $\{s_i\}<\{s'_i\}$, or $\{s_i\}=\{s'_i\}$ and $\{r_i\}<\{r'_i\}$.
 \end{itemize}
But $\partial(\{(0,u_i,v_i)\})=\partial(\{(0,r_j,s_j)\})$ and $\partial(\{(0,u'_i,v'_i)\})=\partial(\{(0,r'_j,s'_j)\})$; if (a) takes place then $\partial(\{(0,u_i,v_i)\})<\partial(\{(0,u'_i,v'_i)\})$. If (b) takes place then $\{v_i\}<\{v'_i\}$, or $\{v_i\}=\{v'_i\}$ and $\{u_i\}<\{u'_i\}$. Therefore $\xi(\widetilde{\varphi}_2(M))<\xi(\widetilde{\varphi}_2(\widetilde{M}))$.

Finally let $M=NP$ and $\widetilde{M}=N'P'\in B$, with $N$, $N'\in B_1$ and $P$,  $P'$ monomials in the variables $z_{t}$. If $P<P'$, then $\widetilde{\varphi_2}(P)<\widetilde{\varphi_2}(P')$. Since $\widetilde{\varphi_2}$ fixes the variables  $y$, it follows $\widetilde{\varphi_2}(M)=N\widetilde{ \varphi_2}(P) <N'\widetilde{\varphi_2}(P')=\widetilde{\varphi_2}(N'P')=\widetilde{\varphi_2}(\widetilde{M})$. If $P=P'$ then $N<N'$, thus $\widetilde{\varphi_2}(M)= N\widetilde{\varphi_2}(P)<N'\widetilde{\varphi_2}(P')=\widetilde{\varphi_2}(N'P')=\widetilde{\varphi_2}(\widetilde{M})$.
\end{proof}

\begin{lemma}
\label{mult5}
Suppose $M$, $\widetilde{M}\in B_i$, $i=1$, 2. If $M\leq'\widetilde{M}$ then there exist $\widetilde{\varphi}\in\widetilde{\Phi}$, $N\in B_1$, and $P=z_{t_1}\cdots z_{t_k}$, $k\geq 0$ with $N\widetilde{\varphi}(M)P=\widetilde{M}$.
\end{lemma}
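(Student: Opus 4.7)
The plan is to exploit the bijection $\xi\colon B\to V$ to translate the hypothesis $M\le'\widetilde{M}$ into a coordinatewise majorization of multiplicities, from which both the endomorphism $\widetilde{\varphi}$ and the factors $N$, $P$ can be read off. By the definitions of $\le_s$ (Theorem~\ref{higman_pwos}) and of $\le'$, the assumption $M\le'\widetilde{M}$ furnishes indices $i_1<i_2<\cdots$ in $\mathbb{N}$ such that, for each $j$, the $j$-th entry of $\xi(M)$ is $\le$ (componentwise) the $i_j$-th entry of $\xi(\widetilde{M})$. Define the monotone injection $\varphi\colon\mathbb{N}\to\mathbb{N}$ by $\varphi(j)=i_j$ and let $\widetilde{\varphi}\in\widetilde{\Phi}$ be the associated endomorphism. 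Lemma~\ref{comp} gives $\xi(\widetilde{\varphi}(M))=\varphi(\xi(M))$, so the multiplicity of each $y_k$, of each $z_k$ at a $c$-slot, and of each $z_k$ at a $d$-slot in $\widetilde{\varphi}(M)$ is bounded above by the corresponding multiplicity in $\widetilde{M}$.

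In Case~1 ($M,\widetilde{M}\in B_1$), set $P=1$ and let $N\in B_1$ be the (unique) $y$-monomial whose multiplicity of each $y_k$ equals the difference of those in $\widetilde{M}$ and in $\widetilde{\varphi}(M)$; these differences are non-negative by the previous paragraph. Since the $y$'s commute modulo $I$, the product $N\widetilde{\varphi}(M)$ reduces in $D\langle X\rangle/I$ to $\widetilde{M}$.

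In Case~2 ($M,\widetilde{M}\in B_2$), define $N\in B_1$ from the $y$-part exactly as in Case~1. For the $z$-part, let $\{\varphi(c_r)\}$, $\{\varphi(d_r)\}$ denote the multisets of indices appearing at $c$-slots and $d$-slots of $\widetilde{\varphi}(M)$, and let $\{c'_r\}$, $\{d'_r\}$ denote those of $\widetilde{M}$. Componentwise majorization yields $\{\varphi(c_r)\}\subseteq\{c'_r\}$ and $\{\varphi(d_r)\}\subseteq\{d'_r\}$ as multisets. Writing $m$, $m'$ for the numbers of $c$-slots in $M$, $\widetilde{M}$ and $\epsilon_B,\epsilon_{\widetilde{M}}\in\{0,1\}$ for whether the final $d$-variable is absent, the required length is $|P|=(2m'-\epsilon_{\widetilde{M}})-(2m-\epsilon_B)\ge 0$: the borderline case $m=m'$ with $\epsilon_B<\epsilon_{\widetilde{M}}$ is ruled out by summing the majorizing inequalities for the third coordinate, $\partial(v^{(3)})\ge\partial(u^{(3)})$. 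Moreover $|P|$ decomposes into $m'-m$ entries destined for $c$-slots and $m'-m+\epsilon_B-\epsilon_{\widetilde{M}}$ entries destined for $d$-slots, matching the cardinalities of the multiset differences $\{c'_r\}\setminus\{\varphi(c_r)\}$ and $\{d'_r\}\setminus\{\varphi(d_r)\}$ respectively. Build $P=z_{t_1}\cdots z_{t_{|P|}}$ by placing the elements of these two multisets into the positions of $P$ whose parity within the combined $z$-word of $\widetilde{\varphi}(M)P$ matches the intended slot type (which is determined by $\epsilon_B$).

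To conclude, observe that in $N\widetilde{\varphi}(M)P$ the $y$-letters already precede all the $z$-letters, so reducing to canonical form never invokes the identity $yz+zy$; only $[y_i,y_j]=0$ and $z_1z_2z_3-z_3z_2z_1$ are used, both of which are sign-preserving. By construction, the resulting canonical monomial has the same $y$-multiplicities, the same $c$- and $d$-multisets, and the same length parity as $\widetilde{M}$, and therefore equals $\widetilde{M}$ in $B$. The main obstacle is the bookkeeping in Case~2: handling the four parity combinations for $|B|$ and $|Z_{\widetilde{M}}|$, verifying $|P|\ge 0$ in the borderline subcase (which is precisely where the $d$-coordinate of the majorization has to be invoked), and confirming that the interleaving prescription for $P$ produces the correct canonical $c$-$d$ pattern after sorting via $z_1z_2z_3-z_3z_2z_1$.
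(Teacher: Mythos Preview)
Your proof is correct and follows essentially the same route as the paper's: extract a single monotone injection $\varphi$ from the $\le_s$ relation on $\xi(M),\xi(\widetilde{M})$, let $N\in B_1$ absorb the $y$-multiplicity deficits, and build $P$ from the leftover $c$- and $d$-multisets so that $\widetilde{\varphi}(\overline{P})P$ has the same $\xi$-image as the $z$-part of $\widetilde{M}$. Your handling of the four parity combinations (and the exclusion of the would-be negative-length case via $\partial(u^{(3)})\le\partial(v^{(3)})$) is actually more explicit than the paper's, which treats the even-$\overline{P}$ case in detail and then dispatches the odd case by remarking that one sets $P=z_{f_1}z_{e_1}\cdots$ and repeats the computation; the only cosmetic slips are the symbols $\epsilon_B$, $|B|$, $|Z_{\widetilde{M}}|$ where you presumably mean $\epsilon_M$ and the lengths of the respective $z$-parts.
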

\begin{proof}
First we consider $M$, $\widetilde{M}\in B_1$, $M=y_1^{r_1}\cdots y_n^{r_n}$ and $\widetilde{M}=y_1^{s_1}\cdots y_n^{s_n}$ where $r_n\neq 0$ or $s_n\neq 0$. As $M\leq'\widetilde{M}$, there is $\varphi\in \Phi$ such that $r_j\leq s_{\phi(j)}$, for every $j\geq 1$. We also have $\xi(\widetilde{\varphi}(M))=\{r'_i\}$ where $r'_i=r_j$ if $i=\varphi(j)$ for some $j$, and $r'_i=0$ otherwise. Therefore $\widetilde{\varphi}(M)=y_1^{r'_1}\cdots y_{\varphi(n)}^{r'_{\varphi(n)}}$ since $\varphi$ is increasing. For every $i\geq 1$ we denote $l_i=s_i-r'_i$. Then $l_i=s_i$ if $i\notin \varphi(\mathbb{N})$ and $l_i=s_{\phi(j)}-r_j>0$ if $i=\varphi(j)$. Hence we can consider $N=y_1^{l_1}\cdots y_{\varphi(n)}^{l_{\varphi(n)}}$. For $\xi(N\widetilde{\varphi}(M))$ we obtain
 \begin{align*} 
\xi(N\widetilde{\varphi}(M)) & = \xi(y_1^{l_1}\cdots y_{\phi(n)}^{l_{\phi(n)}}\widetilde{\varphi}(M)) = \xi(y_1^{l_1}\cdots y_{\varphi(n)}^{l_{\varphi(n)}}y_1^{r'_1}\cdots y_{\phi(n)}^{r'_{\phi(n)}})\\
 & = \xi(y_1^{l_1+r'_1}\cdots y_{\varphi(n)}^{l_{\varphi(n)}+r'_{\varphi(n)}}) = \{l_i+r'_i\}=\{s_i\}=\xi(\widetilde{M}). 
\end{align*}
Therefore $N\widetilde{\varphi}(M)P=\widetilde{M}$, where $P=1$. Let now $M=\overline{N}\,\overline{P}$, $\widetilde{M}=\widetilde{N}\,\widetilde{P}\in B_2$, where $\overline{N}=y_{a_1}\cdots y_{a_k}$, $\widetilde{N}=y_{a'_1}\cdots y_{a'_{k'}}$, $\overline{z}=z_{c_1}z_{d_1}\cdots z_{c_m}\widehat{z_{d_m}}$, and $\widetilde{P}=z_{c'_1}z_{d'_1}\cdots z_{c'_{m'}}\widehat{z_{d'_{m'}}}$. 
We have to show there exist $\widetilde{\varphi}\in \widetilde{\Phi}$ and $P=z_{t_1}\cdots z_{t_i}$ satisfying the following conditions:
\begin{itemize}
 \item [(a)] $N\widetilde{\varphi}(\overline{N})=\widetilde{N}$; 
 \item [(b)] $\widetilde{\varphi}(\overline{P})P=\widetilde{P}$.
\end{itemize}
Observe that these conditions imply $N\widetilde{\varphi}(M)P= N \widetilde{\varphi}(\overline{N})\widetilde{\varphi}(\overline{P})P = \widetilde{N}\,\widetilde{P}=\widetilde{M}$. 

We recall that $\xi(M)=\{(u^{(1)}_i, u^{(2)}_i, u^{(3)}_i)\}$ where $u^{(1)}_i=\sum_{a_j=i}1$, $u^{(2)}_i=\sum_{c_j=i}1$, $u^{(3)}_i= \sum_{d_j=i}1$. Analogously $\xi(\widetilde{M})=\{(v^{(1)}_i, v^{(2)}_i, v^{(3)}_i)\}$ where $v^{(1)}_i=\sum_{a'_j=i}1$, $v^{(2)}_i=\sum_{c'_j=i}1$, $v^{(3)}_i = \sum_{d'_j=i}1$. 

(a) Since $M\leq' \widetilde{M}$, there is some $\varphi\in \Phi$ such that $u^{(i)}_j<v^{(i)}_{\varphi(j)}$, $j\geq 1$, and $i\in \{1,2,3\}$. Suppose  $l_k^{(i)}=v^{(i)}_k$ if $k\notin \varphi(\mathbb{N})$, and $l^{(i)}_k=v^{(i)}_{\varphi(j)}-u^{(i)}_j$ if $k=\varphi(j)$, for every $j$, $k\geq 1$, and $i\in \{1,2,3\}$. We observe $l^{(i)}_k\geq 0$, thus we consider a monomial $N$ such that $\xi(N)=\{l^{(1)}_j\}$. We also have $\overline{N}\leq'\widetilde{N}$, since $u^{(1)}_j <v^{(1)}_{\varphi(j)}$, $j\geq 1$. The first part of the proof yields $N\widetilde{\varphi}(\overline{N})=\widetilde{N}$. As for (b), take the element  $\overline{P}=z_{c_1}z_{d_1}\cdots z_{c_m}z_{d_m}$, and set $P=z_{e_1}z_{f_1}\cdots z_{e_{m''}}z_{f_{m''}}$, where $l^{(2)}_i=\sum_{e_j=i}1$ and $l^{(3)}_i=\sum_{f_j=i}1$. Therefore 
\[
\xi(\widetilde{\varphi}(\overline{P})P) = \xi(z_{\varphi{(c_1)}}z_{\varphi{(d_1)}}\cdots z_{\varphi{(c_m)}}z_{\varphi{(d_m)}}z_{e_1}z_{f_1}\cdots z_{e_{m''}}z_{f_{m''}}) = \{(0,u'^{(2)}_i, u'^{(3)}_i)\}.
\]
Here $u'^{(2)}_i=\sum_{\varphi(c_j)=i}1+ \sum_{e_j=i}1$, and $u'^{(3)}_i= \sum_{\varphi(d_j)=i}1+ \sum_{f_j=i}1$. If $i=\varphi(s)$ for some $s$ then   
$u'^{(2)}_i = \sum_{\varphi(c_j)=\varphi(s)}1 + \sum_{e_j=\varphi(s)}1 = u^{(2)}_s+l^{(2)}_{\phi(s)}=v^{(2)}_i$. Also if $i\notin \varphi(\mathbb{N})$ then $u'^{(2)}_i=0+\sum_{e_j=i}1 =l^{(2)}_i=v^{(2)}_i$. Analogously $u'^{(3)}_i=v^{(3)}_i$, $i\geq 1$. Then $\xi(\widetilde{P})=\{(0,v^{(2)}_i, v^{(3)}_i)\}=\{(0,u'^{(2)}_i, u'^{(3)}_i)\}=\xi(\widetilde{\varphi}(\overline{P})P)$. This means $\widetilde{\varphi}(\overline{P})P = \widetilde{P}$. If, finally, $\overline{P}=z_{c_1}z_{d_1}\cdots z_{c_m}$ we put $P=z_{f_1}z_{e_1}\cdots z_{f_{m''}}$ and repeat the above argument.  
\end{proof}

\begin{definition}
 Let $f=\sum_{i=1}^n\alpha_im_i\in D\langle X\rangle/I$, $m_i\in B$, and $\alpha_i\in D$. If $\alpha_1\neq 0$ and $m_1>m_i$ for every $i\geq 1$, then the leading monomial, coefficient, and term of $f$ are, respectively, $lm(f)=m_1$, $lc(f)=\alpha_1$, and $lt(f)=\alpha_1m_1$.
\end{definition}

\begin{lemma}
\label{mult6}
 Let $f$ and $\widetilde{f}$ be linear combinations of elements of $B$, and let  $lm(f)=M$ e $lm(\widetilde{f})=\widetilde{M}$. If $M\leq' \widetilde{M}$, then there exist $\widetilde{\varphi}\in \widetilde{\Phi}$, $N\in B_1$, and $P\in B$ such that $lm(N\widetilde{\varphi}(f)P)=\widetilde{M}$.
\end{lemma}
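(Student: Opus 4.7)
The strategy is to decompose $f$ into its leading term plus lower-order terms, invoke Lemma~\ref{mult5} to match leading terms, and then argue that all lower-order pieces remain strictly below $\widetilde{M}$ after applying $\widetilde{\varphi}$ and multiplying by $N$ and $P$.

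Write $f = \alpha_1 M + \sum_{i \geq 2} \alpha_i m_i$ with $\alpha_1 \neq 0$ and $m_i \in B$ satisfying $m_i < M$ for $i \geq 2$. Since the order $\leq'$ only relates elements of the same $B_j$, the hypothesis $M \leq' \widetilde{M}$ forces $M$ and $\widetilde{M}$ to lie in a common $B_j$. Lemma~\ref{mult5} then supplies $\widetilde{\varphi} \in \widetilde{\Phi}$, $N \in B_1$, and $P = z_{t_1}\cdots z_{t_k}$ (with $k \geq 0$, so $P$ is either $1$ or a monomial in $z$-variables) such that $N\widetilde{\varphi}(M)P = \widetilde{M}$ in $D\langle X\rangle/I$. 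Applying $\widetilde{\varphi}$ to $f$ and multiplying on the left by $N$ and on the right by $P$ gives
\[
N\widetilde{\varphi}(f)P = \alpha_1 \widetilde{M} + \sum_{i \geq 2} \alpha_i\, N\widetilde{\varphi}(m_i)P,
\]
and it suffices to show $\delta(N\widetilde{\varphi}(m_i)P) < \widetilde{M}$ for every $i \geq 2$.

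To verify this, one chains the monotonicity lemmas. Since $\varphi$ is monotone, $\widetilde{\varphi}$ sends $B$ into $B$, so $\widetilde{\varphi}(m_i), \widetilde{\varphi}(M) \in B$; Lemma~\ref{mult4}(b) gives $\widetilde{\varphi}(m_i) < \widetilde{\varphi}(M)$. Left-multiplying by the $y$-monomial $N \in B_1$, Lemma~\ref{mult} yields $\delta(N\widetilde{\varphi}(m_i)) < \delta(N\widetilde{\varphi}(M))$. Finally, right-multiplying by the $z$-monomial $P$, Lemma~\ref{mult3}(b) gives
\[
\delta(N\widetilde{\varphi}(m_i)P) < \delta(N\widetilde{\varphi}(M)P) = \widetilde{M},
\]
as required. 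Combined with the observation that $N\widetilde{\varphi}(m_i)P$ canonicalizes without any sign change (the $y$-variables commute modulo $I$, and the reordering of $z$-variables via $z_1z_2z_3 = z_3z_2z_1$ also carries no sign), this yields $lm(N\widetilde{\varphi}(f)P) = \widetilde{M}$.

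The main subtle point, and in fact the only genuine obstacle, is to confirm that the strict inequality from Lemma~\ref{mult4} propagates strictly through each subsequent step, rather than collapsing into equality. Lemmas~\ref{mult} and~\ref{mult3}(b) are stated with the weak relation $\leq$, but a careful reading of their case-by-case proofs (cases where the two inputs lie in different $B_j$, where their $z$-parts differ strictly, and where their $z$-parts agree and strict inequality is inherited from the $y$-parts) shows that a strict input produces a strict output in every scenario. Once this is verified, the chain of inequalities closes and the lemma follows.
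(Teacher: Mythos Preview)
Your proof is correct and follows essentially the same route as the paper: invoke Lemma~\ref{mult5} to produce $\widetilde{\varphi}$, $N$, and $P$ with $N\widetilde{\varphi}(M)P=\widetilde{M}$, then use the order-preservation lemmas (Lemmas~\ref{mult}, \ref{mult3}(b), and \ref{mult4}) to push the lower monomials of $f$ below $\widetilde{M}$. The paper's version is terser and simply asserts that the order is preserved under each operation, whereas you spell out the chain of inequalities and explicitly address the propagation of strictness; this extra care is warranted but does not constitute a different argument.
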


\begin{proof}
As $M\leq \widetilde{M}$, by Lemma~\ref{mult4}, there are $\widetilde{\varphi}\in\widetilde{\Phi}$, $N\in B_1$, and $P=z_{t_1}\cdots z_{t_k}$, $k\geq 0$ such that $N\widetilde{\varphi}(M)P = \widetilde{M}$. We saw that the order $\leq$ is preserved under $\widetilde{\varphi}$, and also by multiplications from the left by $N$, and from the right, by $P$. Therefore $lm(N\widetilde{\varphi}(f)P) =  N \widetilde{\varphi}(M)P = \widetilde{M}$.
\end{proof}

The following is our main result in the paper.

\begin{theorem}
Let $D$ be an associative, commutative, unital Noetherian domain. Let $I$ be the   graded ideal of weak identities generated by the identities $[y_1,y_2]$, $z_1z_2z_3-z_3z_2z_1$, and $yz+zy$. Then every graded ideal of weak identities containing $I$, is finitely generated as a graded ideal of weak identities. In other words the graded identities for the pair $(M_2(D), sl_2(D))$ satisfy the Specht property.
\end{theorem}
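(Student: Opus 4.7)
The plan is to argue by contradiction, combining the PWOS structure on $(B,\leq')$ with the Noetherian hypothesis on $D$ to produce a strictly ascending chain of ideals in $D$. This follows the classical Higman template for Specht-type problems. Suppose $J\supseteq I$ is a weak T-ideal that is not finitely generated; my goal is to manufacture an infinite strictly increasing chain of ideals of $D$ and thereby contradict the Noetherian hypothesis.

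First I would inductively select a sequence $f_1,f_2,\ldots\in J$ as follows. Setting $J_0=I$, and having chosen $f_1,\ldots,f_{n-1}$, let $J_{n-1}$ be the weak T-ideal generated by $\{f_1,\ldots,f_{n-1}\}\cup I$. Because $J$ is not finitely generated, $J_{n-1}\subsetneq J$, and since $(B,\leq)$ is linearly well ordered by Lemma~\ref{Vislwo}, I can pick $f_n\in J\setminus J_{n-1}$ whose leading monomial $M_n:=lm(f_n)$ is $\leq$-minimal among the elements of $J\setminus J_{n-1}$; set $a_n:=lc(f_n)$. Define the set $A_n:=\{lc(g)\mid g\in J_{n-1},\ lm(g)=M_n\}\cup\{0\}$; one checks routinely that $A_n$ is an ideal of $D$ (closure under sum is trivially true when the leading term cancels, and $D$ is a domain so multiplication preserves the leading monomial). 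The minimality of $M_n$ now forces $a_n\notin A_n$: otherwise one could pick $g\in J_{n-1}$ with $lm(g)=M_n$ and $lc(g)=a_n$, and then $f_n-g\in J\setminus J_{n-1}$ would have strictly smaller leading monomial than $M_n$, violating the choice of $f_n$.

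Next I would exploit that $(B,\leq')$ is a PWOS, transported from $(V,\leq')$ via the bijection $\xi$ using Lemma~\ref{Vispwos}. By condition~(1) of Proposition~\ref{higman}, the sequence $(M_n)$ contains an infinite subsequence $M_{n_1}\leq' M_{n_2}\leq'\cdots$. For any indices $i<j$, applying Lemma~\ref{mult6} to $M_{n_i}\leq' M_{n_j}$ produces $\widetilde{\varphi}\in\widetilde{\Phi}$, $N\in B_1$, and $P\in B$ with $lm(N\widetilde{\varphi}(f_{n_i})P)=M_{n_j}$. Because $N$, $P$, and the image under $\widetilde{\varphi}$ of any basis monomial are monic, and because the strict monotonicity established in Lemmas~\ref{mult} and~\ref{mult4} ensures that every term of $f_{n_i}$ of leading monomial strictly less than $M_{n_i}$ produces a contribution with leading monomial strictly less than $M_{n_j}$, the leading coefficient is preserved: $lc(N\widetilde{\varphi}(f_{n_i})P)=a_{n_i}$. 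Since $f_{n_i}\in J_{n_j-1}$ and weak T-ideals are closed under graded endomorphisms and two-sided multiplication, the element $N\widetilde{\varphi}(f_{n_i})P$ belongs to $J_{n_j-1}$, and hence $a_{n_i}\in A_{n_j}$.

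Combining these observations, for every $j\geq 2$ we have $(a_{n_1},\ldots,a_{n_{j-1}})\subseteq A_{n_j}$ while $a_{n_j}\notin A_{n_j}$, so in particular $a_{n_j}\notin(a_{n_1},\ldots,a_{n_{j-1}})$. Therefore
\[
(a_{n_1})\subsetneq (a_{n_1},a_{n_2})\subsetneq (a_{n_1},a_{n_2},a_{n_3})\subsetneq\cdots
\]
is a strictly ascending chain of ideals of $D$, contradicting the Noetherian hypothesis. The principal technical point is to verify carefully that Lemma~\ref{mult6} preserves the leading coefficient, which amounts to checking that the operations $N\cdot$, $\widetilde{\varphi}(\,\cdot\,)$, and $\cdot P$ are strictly order-preserving on $B$; this is precisely the strict-monotonicity content of Lemmas~\ref{mult}--\ref{mult4}, and once it is in hand the contradiction falls out immediately from the Noetherian ACC on $D$.
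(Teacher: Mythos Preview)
Your proof is correct and follows essentially the same template as the paper's: contradiction, minimal leading monomials, extraction of a $\leq'$-increasing subsequence via the PWOS structure, and Lemma~\ref{mult6} together with the monotonicity Lemmas~\ref{mult}--\ref{mult4} to transport leading terms. The only cosmetic difference is in how the endgame is packaged: the paper uses the Noetherian hypothesis to write $\alpha_{k+1}=\sum\beta_l\alpha_l$, forms the combination $h=\sum\beta_l N_l\widetilde{\varphi}(h_l)P_l$, and subtracts to contradict the minimality of $M_{i_{k+1}}$ directly, whereas you encode the same information in the ideals $A_n\subseteq D$ and produce an explicit strictly ascending chain $(a_{n_1})\subsetneq(a_{n_1},a_{n_2})\subsetneq\cdots$ violating ACC; the two arguments are interchangeable.
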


 \begin{proof}
We suppose that, on the contrary, there exists a graded ideal $J$ of weak identities that contains $I$, and that $J$ is not finitely generated. 

Thus there exists $g_1\in J\setminus I$ such that for the weak graded ideal $I_1$ generated by $g_1$ and $I$ we have $ I \subsetneq I_1\subsetneq J$. But $J$ is not finitely generated, hence we choose some $g_2\in J\setminus I_1$, and then generate a weak graded ideal with $I_1$ and $g_2$, and so on. Thus we obtain an ascending chain $ I \subsetneq I_1\subsetneq I_2\subsetneq \cdots $ of weak graded ideals. It induces another ascending chain of weak graded ideals $ J_1\subsetneq J_2\subsetneq \cdots $ in $D\langle X \rangle/I$ where $J_k=I_k/I$. Below we show that this is impossible.

Since $B$ is a basis of the $D$-module $D\langle X\rangle/I$, we write every element of $R_i=J_i\setminus J_{i-1}$ as a linear combination of elements of $B$. Denote by $L_i$ the set of the leading monomials of the elements of $R_i$. Since $R_i\neq \emptyset$ we have  $L_i\neq \emptyset$. But $(B,\leq)$ is linearly well ordered, hence each $L_i$ has unique least element $M_i$. Moreover $(B,\leq')$ is partially well ordered, thus the sequence $\{M_i\}$ has an infinite subsequence $\{M_{i_k}\}$ which is non-decreasing with respect to $\leq'$, by Higman's theorem \ref{higman}.

For each $M_{i_l}\in L_{i_l}$, we choose a non-zero element $h_l\in R_{i_l}$ whose leading term is $lt(h_l)=\alpha_{l}M_{i_l}$, $\alpha_l\in D$. Let $I_0$ be the ideal generated by $\alpha_1$, $\alpha_2$, \dots, in $D$. As $D$ is  Noetherian, there exists $k$ such that $I_0$ is generated by $\alpha_1$, \dots, $\alpha_k$. It follows that $\alpha_{k+1}=\sum_l^k\beta_l\alpha_l$, for some $\beta_l\in D$. Since $M_{i_l}\leq' M_{i_{k+1}}$, for each $l=1$, \dots, $k$, there exist $N_l$, $P_l\in B$ and $\widetilde{\varphi}\in \widetilde{\Phi}$ such that $lm(N_l\widetilde{\varphi}(h_l)P_l)=M_{i_{k+1}}$, by Lemma~\ref{mult6}. Therefore  $lt(N_l\widetilde{\varphi}(h_l)P_l)=\alpha_{l}M_{i_{k+1}}$. We consider $h=\sum_{l=1}^k\beta_lN_l\widetilde{\varphi}(h_l)P_l$, and note that $h=\sum_{l=1}^k\beta_l\alpha_lM_{i_{k+1}}+$ (summands that are less than $M_{i_{k+1}}$), hence $lt(h)=\sum_{l=1}^k\beta_l\alpha_lM_{i_{k+1}}=(\sum_{l=1}^k\beta_l\alpha_l)M_{i_{k+1}}=\alpha_{k+1}M_{i_{k+1}}$.

For each $l=1$, \dots, $k$, we have $h_l\in R_{i_l}\subsetneq J_{i_l}\subseteq J_{i_{(k+1)}-1}$, since $i_1<i_2<\cdots <i_k\leq i_{(k+1)}-1$. Also $\widetilde{\varphi}(h_l)\in J_{i_{(k+1)}-1}$, since $I_k$ is an ideal of weak graded identities, and as such is closed under the endomorphisms from $\widetilde{\Phi}$. Thus $N_l\widetilde{\varphi}(h_l)P_l\in J_{i_{(k+1)}-1}$ and  $h\in J_{i_{(k+1)}-1}$. 

Now $f=h_{k+1}-h\in D\langle X\rangle/I$, and the leading term of $h_{k+1}$ is $\alpha_{k+1}M_{i_{k+1}}$. This is the same leading term as that of $h$, hence the leading monomial of $f$ is less than that of $M_{i_{k+1}}$. By the choice of the  $h_l$, we have $h_{k+1}\in J_{i_{(k+1)}}\setminus J_{i_{(k+1)}-1}$. Since $h\in J_{i_{(k+1)}-1}\subsetneq J_{i_{(k+1)}}$, it follows $f\in J_{i_{(k+1)}}\setminus J_{i_{(k+1)}-1}= R_{i_{k+1}}$. But this is impossible since the leading monomial of $f$ lies in $L_{i_{k+1}}$ by definition, and it is less than  $M_{i_{k+1}}$,  the least element in $L_{i_{k+1}}$. 
 \end{proof}

\end{document}